\newtheorem{theorem}{Theorem}[section]
\newtheorem{lemma}[theorem]{Lemma}
\newtheorem{definition}[theorem]{Definition}
\newtheorem{corollary}[theorem]{Corollary}
\newtheorem{proposition}[theorem]{Proposition}
\newtheorem{lem-def}[theorem]{Lemma-Definition}
\DeclareRobustCommand\longtwoheadrightarrow
\newcommand{\hooklongrightarrow}{\lhook\joinrel\longrightarrow}
\newcommand{\Q}{\mathbb Q}
\def\op{\operatorname}
\def\al{\alpha}
\def\as#1{\renewcommand\arraystretch{#1}}
\def\be{\beta}
\def\chr{\op{char}}
\def\diso{\lower.4ex\hbox{$\downarrow$}\raise.4ex\hbox{\mbox{\scriptsize
$\wr$}}}
\def\e{\medskip}
\def\g{\Gamma}
\def\ga{\gamma}
\def\gal{\op{Gal}}
\def\gg{\mathcal{G}}
\def\ggm{\mathcal{G}_\mu}
\def\gm{\g_\mu}
\def\gr{\operatorname{gr}}
\def\hm{H_\mu}
\def\imp{\,\Longrightarrow\,}
\def\iso{\ \lower.3ex\hbox{\as{.08}$\begin{array}{c}\lra\\\mbox{\tiny $\sim\,$}\end{array}$}\ }
\def\kb{\overline{K}}
\def\kx{K[x]}
\def\la{\lambda}
\def\lg{l\raise.6ex\hbox to.2em{\hss.\hss}l}
\def\lra{\,\longrightarrow\,}
\def\minf{\mu_{-\infty}}
\def\mn{\op{Min}}
\def\mx{\op{Max}}
\def\om{\omega}
\def\orb{\hbox to  .3em{$\backslash$}\backslash}
\def\ppa{\mathcal{P}_{\alpha}}
\def\pset{\mathcal{P}}
\def\qg{\mathbb{Q}\g}
\def\ram{^{\mbox{\tiny ram}}}
\def\sep{_{\mbox{\tiny sep}}}
\def\t{\theta}
\def\Min{\mathrm{Min}}
\newcounter{cs}
\newcommand{\casos}{\begin{itemize}}
\newcommand{\fcasos}{\end{itemize}\setcounter{cs}{1}}
\newfont{\tit}{cmr12 scaled \magstep3}
\title[Invariants of algebraic elements]{Invariants of algebraic elements over henselian fields}
\author[Moraes de Oliveira]{Nath\'alia Moraes de Oliveira}
\address{Departament de Matem\`{a}tiques,
         Universitat Aut\`{o}noma de Barcelona,
         Edifici C, E-08193 Bellaterra, Barcelona, Catalonia, Spain}
\email{noliveira@mat.uab.cat}
\thanks{Partially supported by grant 204224/2014-4 from CNPq}
\date{}
\keywords{Okutsu sequences, inductive valuations, defecteless polynomials, Krasner's constant}
\begin{document}

\begin{abstract}
Let $(K,v)$ be a henselian valued field. In this paper, we use Okutsu sequences for monic, irreducible polynomials in $\kx$, and their relationship with MacLane chains of inductive valuations on $\kx$, to obtain some results  on the computation of invariants of algebraic elements over $K$.
\end{abstract}

\maketitle

\section*{Introduction}
Let $(K,v)$ be a henselian field and denote still by $v$ the canonical extension of this valuation to a fixed algebraic closure $\kb$ of $K$. 
Let $\g=v(K^*)$ be the value group of $v$ over $K$, and denote by $\qg=\g\otimes\Q$ the value group of $v$ over $\kb$.

Let $F\in\kx$ be a monic, irreducible polynomial  of degree $n>1$. Denote by $Z(F)\subset\kb$ the set of rootos of $F$, and take $\t\in Z(F)$. Consider the following set
\begin{equation}\label{wset}
W(\t)=\left\{\dfrac{v(g(\t))}{\deg(g)}\;\Big|\; g\in\kx\mbox{ monic},\ 0<\deg(g)<n\right\}\subset\qg.
\end{equation}

If this set contains a maximal element, we define the \emph{weight} of $\t$ (or $F$) as:
$$
w(F)=w(\t)=\mx\left(W(\t)\right).
$$

A pair  $\phi,F$ is a \emph{distinguished pair} of polynomials if $\phi\in\kx$ is a monic polynomial of minimal degree satisfying $v(\phi(\t))/\deg(\phi)=w(F)$.

An \emph{Okutsu frame} of $F$ is a list $[\phi_0,\phi_1,\dots,\phi_r]$ of monic, irreducible polynomials in $\kx$, such that $\deg(\phi_0)=1$ and $\phi_i,\phi_{i+1}$ is a distinguished pair for all $0\le i\le r$, where we agree that $\phi_{r+1}=F$.

If $F$ admits an Okutsu frame, we say that $F$ is an \emph{Okutsu polynomial}, and the index $r\ge0$ is called the \emph{Okutsu depth} of $F$. 

A monic, irreducible polynomial $F\in\kx$ is an Okutsu polynomial if and only if it is defectless; that is, $\deg(F)=e(K(\t)/K)f(K(\t)/K)$. As a consequence, there is a tight link between defectless polynomials and key polynomials for inductive valuations on $\kx$ \cite[Sec. 10.4]{IndVals}. In particular, we may attach to any defectless polynomial $F$ a bunch of arithmetic invariants, which can be read in any optimal MacLane chain of a certain inductive valuation on $\kx$ canonically associated with $F$ (cf. section \ref{secIndVals}).  

In Corollary \ref{weight} we give an explicit formula for the weight of a defectless polynomial in terms of these invariants. \e 

On the other hand, the value $v(g(\t))/\deg(g)$ is the average of the values $v(\t-\beta)$ for $\beta$ running on the roots of $g$ in $\kb$. Let us focus our attention on these latter values, and consider the set
\begin{equation}\label{Dset}
\Delta(\t)=\{v(\t-\al)\mid \al\in\kb,\ \deg_K(\al)<\deg(\t)\}.
\end{equation}

If $F$ is defectless, this set contains a maximal value which is called the 
\emph{main invariant of} $\t$ (or $F$), and is denoted
$$
\delta(F)=\delta(\t)=\mx\left(\Delta(\t)\right).
$$

In this paper, we use the connection between defectless polynomials and MacLane chains of inductive valuations to prove that, if $\t$ is quasi-tame over $K$, then the main invariant coincides with Krasner's constant:
$$
\om(F)=\om(\t)=\mx\left(\Omega(\t)\right),\qquad \Omega(\t)=\{v(\t-\t')\mid \t'\in\op{Z}(F),\ \t'\ne\t\},
$$
where we consider $\Omega(\t)$ as a multiset of cardinality $n-1$. 

Also, we find explicit formulas for all values in the multiset $\Omega(\t)$, and for their multiplicities, in terms of the discrete invariants supported by any optimal MacLane chain corresponding to $F$.

Most of these results can be found in the literature as the combined contribution of several authors. Special mention deserve Aghigh-Khanduja \cite{MainInvSudesh,CHF}, Brown-Merzel \cite{BrownMerzel,BrownMerzel2} and Singh-Khanduja \cite{TameExtKandhuja}.

Our approach yields more direct proofs and a new arithmetic interpretation of the values of the multiset  $\Omega(\t)$ in the case that $\t$ is quasi-tame.

\section{Inductive valuations and Okutsu frames - A short background}\label{IndOku}

We keep dealing with our henselian valued field $(K,v)$.

In this section, we review some basic facts on valuations on $K[x]$, mainly extracted from \cite{Vaq} and \cite{IndVals}.

\subsection{Key polynomials and augmentation of valuations}
Let $\mu$ be a valuation on $K(x)$ extending $v$.

Let $\g_\mu=\mu\left(K(x)^*\right)$ be the value group, and $k_{\mu}$
 the residue class field of $\mu$.

For any $\alpha\in\g_\mu$, consider the abelian groups:
$$
\ppa=\{g\in \kx\mid \mu(g)\ge \alpha\}\supset
\ppa^+=\{g\in \kx\mid \mu(g)> \alpha\}.
$$  

The \emph{graded algebra of $\mu$ over $\kx$} is the integral domain:
$$
\ggm:=\gr_{\mu}(\kx)=\bigoplus\nolimits_{\alpha\in\g_\mu}\ppa/\ppa^+.
$$

Let $\Delta_\mu=\pset_0/\pset_0^+\subset\ggm$ be the subring of homogeneous elements of degree zero. 
There are canonical injective ring homomorphisms: 
$$ k\hooklongrightarrow\Delta_\mu\hooklongrightarrow k_{\mu}.$$
In particular, $\Delta_\mu$ and $\ggm$ are equipped with a canonical structure of $ k$-algebra. 

There is a natural map $\hm\colon \kx\to \ggm$, given by $\hm(0)=0$ and
$$\hm(g)= g+\pset_{\mu(g)}^+\in\pset_{\mu(g)}/\pset_{\mu(g)}^+, \mbox{ if }g\ne0.
$$

\begin{definition}\label{mu}
A \emph{key polynomial} for $\mu$ is a monic polynomial $\phi\in \kx$ such that 
\begin{itemize}
\item $\hm(\phi)$ is a prime element in $\ggm$.
\item For all $f\in\kx$ with $\deg(f)<\deg(\phi)$, the prime element $\hm(\phi)$ does not divide $\hm(f)$ in $\ggm$. 
\end{itemize}
A key polynomial is necessarily irreducible in $\kx$.
\end{definition}

\begin{theorem}\cite[Thm. 3.9]{KeyPol}\label{bound}
Let $\phi\in\kx$ be a key polynomial for $\mu$. For any monic non-constant $f\in \kx$ we have
$$
\mu(f)/\deg(f)\le w(\mu):=\mu(\phi)/\deg(\phi),
$$
and equality holds if $f$ is a key polynomial for $\mu$.

We say that $w(\mu)$ is the \emph{weight} of $\mu$.
\end{theorem}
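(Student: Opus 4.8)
The plan is to isolate one structural identity about key polynomials and to derive both assertions from it by short formal arguments. The identity is that for a key polynomial $\phi$ the valuation $\mu$ coincides with its own $\phi$-truncation: if $g\in\kx$ is nonzero with $\phi$-adic expansion $g=\sum_j b_j\phi^j$ (so $\deg b_j<\deg\phi$ for all $j$), then
\[
\mu(g)=\min_j\big(\mu(b_j)+j\,\mu(\phi)\big).
\]
Once this is available, the weight bound follows from a one-line trick, and the equality case is immediate from the symmetry of the hypothesis.

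First I would establish the truncation identity. One inequality, $\mu(g)\ge\min_j(\mu(b_j)+j\,\mu(\phi))$, is just the ultrametric inequality together with $\mu(b_j\phi^j)=\mu(b_j)+j\,\mu(\phi)$. For the reverse inequality, put $\alpha=\min_j(\mu(b_j)+j\,\mu(\phi))$ and suppose $\mu(g)>\alpha$. In the graded algebra $\ggm$, the image of $g$ in $\pset_\alpha/\pset_\alpha^{+}$ is $\sum_{j\in J}\hm(b_j)\,\hm(\phi)^{j}$, the sum taken over the set $J$ of indices with $\mu(b_j)+j\,\mu(\phi)=\alpha$, and this image vanishes because $\mu(g)>\alpha$. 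Writing $P=\hm(\phi)$ and $j_0=\min J$, I factor $P^{j_0}$ out of the relation $\sum_{j\in J}\hm(b_j)P^{j}=0$; since $\ggm$ is a domain and $P\ne 0$, this forces $P\mid \hm(b_{j_0})$. But $b_{j_0}\ne 0$ (its index lies in $J$) and $\deg b_{j_0}<\deg\phi$, so $\hm(\phi)\mid\hm(b_{j_0})$ contradicts the $\mu$-minimality requirement in the definition of a key polynomial (Definition~\ref{mu}). (Alternatively one can read off the same identity by comparing $\mu$ with the augmented valuations $[\mu;\phi,\gamma]$, $\gamma>\mu(\phi)$, and letting $\gamma$ decrease to $\mu(\phi)$.)

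Now the inequality of the theorem. Write $m=\deg\phi$ and let $f$ be monic of degree $n\ge 1$. Then $f^{m}$ is monic of degree $nm$, and I claim its $\phi$-adic expansion is $f^{m}=\phi^{\,n}+\sum_{j<n}e_j\phi^{j}$: the nonzero terms $e_j\phi^{j}$ of a $\phi$-adic expansion have degrees lying in the pairwise disjoint intervals $[jm,(j+1)m-1]$, so the term of top degree $nm$ must be $e_n\phi^{n}$ with $\deg e_n=nm-nm=0$, and comparing leading coefficients (both $f^{m}$ and $\phi^{n}$ being monic) gives $e_n=1$. Applying the truncation identity to $f^{m}$ and retaining only the term $j=n$,
\[
m\,\mu(f)=\mu\big(f^{m}\big)=\min_{0\le j\le n}\big(\mu(e_j)+j\,\mu(\phi)\big)\ \le\ \mu(e_n)+n\,\mu(\phi)=n\,\mu(\phi),
\]
whence $\mu(f)/\deg(f)=\mu(f)/n\le\mu(\phi)/m=w(\mu)$. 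Finally, if $f$ is itself a key polynomial for $\mu$, applying the inequality just proved with the roles of $\phi$ and $f$ interchanged yields $w(\mu)=\mu(\phi)/\deg\phi\le\mu(f)/\deg f$, and together with the previous line this gives $\mu(f)/\deg f=w(\mu)$.

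The only step that is not purely formal is the reverse inequality in the truncation identity, and that is where I expect the care to be needed: the argument must be carried out inside the domain $\ggm$, and one must check that the minimizing index $j_0$ really corresponds to a nonzero coefficient before invoking $\mu$-minimality. The device of raising $f$ to the power $\deg\phi$, which turns it into a monic $\phi$-power plus lower $\phi$-order terms, is what makes the concluding estimate drop out with no case distinction on the size of $\deg f$.
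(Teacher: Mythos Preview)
The paper does not prove this theorem at all: it is quoted with a citation to \cite[Thm.~3.9]{KeyPol} and then used as input. So there is no ``paper's own proof'' to compare against; I can only assess whether your argument stands on its own.

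It does. The truncation identity $\mu(g)=\min_j(\mu(b_j)+j\,\mu(\phi))$ for the $\phi$-expansion is indeed the key structural fact about key polynomials, and your graded-algebra proof of it is correct: the crucial point is that from $\sum_{j\in J}\hm(b_j)\hm(\phi)^{j}=0$ one extracts $\hm(\phi)\mid\hm(b_{j_0})$ with $\deg b_{j_0}<\deg\phi$, violating the minimality clause in Definition~\ref{mu}. The passage to the weight inequality via $f^m$ is clean and avoids any case split on whether $\deg\phi$ divides $\deg f$; the observation that the top $\phi$-adic coefficient of a monic polynomial of degree $nm$ is $1$ is exactly what makes $\mu(e_n)=0$ and hence $m\,\mu(f)\le n\,\mu(\phi)$. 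The equality case by symmetry is immediate.

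One small remark on presentation: in the reverse inequality of the truncation identity you should perhaps say explicitly that $\hm(b_{j_0})\ne 0$ in $\ggm$ (which is clear since $b_{j_0}\ne0$ and $\hm$ never kills nonzero elements), so that the divisibility $\hm(\phi)\mid\hm(b_{j_0})$ genuinely contradicts minimality rather than being the trivial relation $\hm(\phi)\mid 0$. You do allude to this (``one must check that the minimizing index $j_0$ really corresponds to a nonzero coefficient''), so this is only a matter of making the quantifier explicit.
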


Consider an order-preserving embedding $\iota\colon \gm\hookrightarrow \g'$ of  ordered  abelian groups. Take $\phi\in \kx$ a key polynomial for $\mu$, and $\ga\in \g'$ any element such that $\mu(\phi)<\ga$. 

For any $f\in\kx$, consider its canonical $\phi$-expansion
$$
f=\sum_{0\le s}a_s\phi^s,\qquad a_s\in\kx,\quad\deg(a_s)<\deg(\phi).
$$
Then, the following mapping is a valuation on $\kx$:
$$
\mu'\colon\kx\rightarrow \g' \cup \left\{\infty\right\},\qquad \mu'(f)=\mn\left\{\mu(a_s)+s\ga\mid 0\le s\right\}.
$$
We say that $\mu'=[\mu;\phi,\ga]$ is an \emph{augmented valuation} of $\mu$. It satisfies:
$$
\mu(\phi)<\ga=\mu'(\phi),\qquad \mu(f)\le \mu'(f),\ \forall\,f\in\kx.
$$
Hence, we have a natural homomorphism of graded algebras $\ggm\to\gg_{\mu'}$.

\begin{lemma}\label{imagedelta}
Let $\mu'=[\mu;\phi,\ga]$. Then,  $\phi$ is a key polynomial for $\mu'$. 

Moreover, let $K_\phi=K[x]/{\phi K[x]}$ and consider the semivaluation:
$$
v_{\phi}\colon \kx \longtwoheadrightarrow K_\phi \stackrel{v}\lra \qg\cup\{\infty\}.$$
Then, the group of values  $\g_{v_\phi}$ is equal to $\gm$.
\end{lemma}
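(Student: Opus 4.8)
This statement has two parts: that $\phi$ is a key polynomial for the augmented valuation $\mu'=[\mu;\phi,\ga]$, and that the group of values of $v_\phi$ equals $\gm$. The first part belongs to the standard theory of augmented valuations (\cite{Vaq}); the plan is to recall the argument through the graded algebra. By the very definition of $\mu'$ one has $\mu'(f)=\mu(f)$ whenever $\deg(f)<\deg(\phi)$, and for a general $f$ with $\phi$-expansion $f=\sum_s a_s\phi^s$ the homogeneous element $\hmp(f)$ equals $\sum_{s\in S}\hmp(a_s)\,\hmp(\phi)^s$, where $S$ is the set of indices realizing $\mn\{\mu(a_s)+s\ga\mid 0\le s\}$. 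From this one reads off that $\hmp(\phi)\nmid\hmp(f)$ when $\deg(f)<\deg(\phi)$ (there the expansion is $f=a_0$, so $0\in S$), and, as in the standard treatment of augmentations, that $\hmp(\phi)$ generates a prime ideal of $\gg_{\mu'}$, behaving as a free variable over the subalgebra generated by the $\hmp(a)$ with $\deg(a)<\deg(\phi)$. Hence $\phi$ is a key polynomial for $\mu'$.

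For the second part, recall that a key polynomial is irreducible, so $K_\phi=K(\t)$ is a field with $\t$ a root of $\phi$, $v_\phi(f)=v(f(\t))$, and every residue class in $K(\t)$ has a representative $h\in\kx$ with $\deg(h)<\deg(\phi)$. Consequently
$$
\g_{v_\phi}=v\!\left(K(\t)^*\right)=\bigl\{\,v(h(\t))\mid h\in\kx,\ 0\le\deg(h)<\deg(\phi)\,\bigr\}.
$$
The crux is the identity $v(h(\t))=\mu(h)$ for every $h\in\kx$ with $\deg(h)<\deg(\phi)$: in other words, $\mu$ and $v_\phi$ agree on polynomials of degree below $\deg(\phi)$. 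The plan is to prove this by induction along a MacLane chain $\mu_0\to\cdots\to\mu_n=\mu$ with key polynomials $\phi_0,\ldots,\phi_n$, the inductive statement being that for any $\mu_j$ in the chain, any key polynomial $\psi$ of $\mu_j$, and any root $\eta$ of $\psi$, one has $v(g(\eta))=\mu_j(g)$ whenever $\deg(g)<\deg(\psi)$. In the inductive step one expands $g$ along $\phi_n$; a root $\eta$ of $\psi$ is so close to a suitable root $\t_n$ of $\phi_n$ that $v(b(\eta))=v(b(\t_n))$ for every $b$ with $\deg(b)<\deg(\phi_n)$, which by the inductive hypothesis (applied to $\mu_{n-1}$, for which $\phi_n$ is a key polynomial) equals $\mu_{n-1}(b)$; the same proximity yields $v(\phi_n(\eta))=\ga_n=\mu_n(\phi_n)$; and then the computation of $\mu_n(g)=\mn_s\{\mu_{n-1}(b_s)+s\ga_n\}$ transports term by term to that of $v(g(\eta))$, the key-polynomial hypothesis on $\psi$ ensuring that the attached residual polynomial does not vanish at the reduction of $\eta$.

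Granting this identity, $\g_{v_\phi}=\{\mu(h)\mid 0\le\deg(h)<\deg(\phi)\}$, which is visibly contained in $\gm$; it contains $\g=\mu(K^*)$ and, for every $j$ with $\deg(\phi_j)<\deg(\phi)$, the value $\ga_j=\mu(\phi_j)=v(\phi_j(\t))$ (note $\phi_j(\t)\neq0$). Since $\g$ together with the $\ga_j$ generate $\gm$ — the only case needing a separate word is $\deg(\phi_n)=\deg(\phi)$, which forces the last augmentation to add no new value, so that $\ga_n$ is already realized by a polynomial of smaller degree — we conclude $\g_{v_\phi}=\gm$.

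The main obstacle is precisely the identity $v(h(\t))=\mu(h)$ for $\deg(h)<\deg(\phi)$; everything else is bookkeeping. Its proof rests on quantifying how close a root of a key polynomial of $\mu$ must lie to a root of the last key polynomial of a MacLane chain for $\mu$, and on the non-vanishing of residual polynomials — the mechanism encoded by Okutsu frames. If one prefers, this identity may be quoted from \cite{IndVals}, leaving only the value-group computation of the previous paragraph to be carried out.
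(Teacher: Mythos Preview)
The paper does not prove this lemma; it is stated without proof in the background Section~\ref{IndOku}, which explicitly collects results ``mainly extracted from \cite{Vaq} and \cite{IndVals}''. There is therefore no in-paper argument to compare against.

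On its own merits, your outline follows the standard route. The first assertion is classical \cite{Vaq}, and your description via the $\phi$-expansion in the graded algebra is the usual mechanism. For the value-group identity, the crux is indeed $v(h(\theta))=\mu(h)$ for all $h$ with $\deg(h)<\deg(\phi)$, and induction along a MacLane chain is precisely how this is handled in \cite{IndVals}; you are right to flag it as the only nontrivial step and to note that it may simply be quoted.

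One point does not go through as written. In the boundary case $\deg(\phi)=\deg(\phi_n)$ you assert that this ``forces the last augmentation to add no new value''; that is not true in general. Over $\Q_p$, take $\mu=\mu_0$ with $\phi_0=x$ and $\ga_0=1/2$, and let $\phi=x$ (a key polynomial for $\mu_0$ by the first part of the very lemma): then $K_\phi=K$, so $\g_{v_\phi}=\Z$, whereas $\g_{\mu_0}=\tfrac{1}{2}\Z$. What one can actually say is that $h:=\phi-\phi_n$ has degree below $\deg(\phi)$ with $\mu(h)\ge\ga_n$; if equality holds, $\ga_n=\mu(h)$ lands in $\g_{v_\phi}$ and you are done, while if $\mu(h)>\ga_n$ one may replace $\phi_n$ by $\phi$ in the chain, and then the identity $\g_{v_\phi}=\gm$ appears to fail as literally stated. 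This causes no trouble in the paper, since the lemma is only invoked (equation~(\ref{ephi})) for $\phi=\phi_i$ of degree strictly larger than that of the preceding key polynomial in an optimal chain; your write-up should either impose that hypothesis explicitly or defer to \cite{IndVals} for the exact formulation being used.
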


\subsection{Inductive valuations}\label{secIndVals}

A valuation $\mu$ on $\kx$, extending $v$, is said to be \emph{inductive} if it is attained after a finite number of augmentation steps:
\begin{equation}\label{depth}
\minf\stackrel{\phi_0,\ga_0}\lra\  \mu_0\ \stackrel{\phi_1,\ga_1}\lra\  \mu_1\ \stackrel{\phi_2,\ga_2}\lra\ \cdots
\ \stackrel{\phi_{r-1},\ga_{r-1}}\lra\ \mu_{r-1} 
\ \stackrel{\phi_{r},\ga_{r}}\lra\ \mu_{r}=\mu,
\end{equation}
with $\ga_0,\dots,\ga_r\in\qg$, and intermediate valuations $\mu_i=[\mu_{i-1};\phi_i,\ga_i]$, for $0< i\leq r$.

The valuation $\minf$ is an incommensurable extension of $v$ to $\kx$, playing the role of absolute minimal extension. Since we are not going to use it, let us simply say that its key polynomial $\phi_0\in\kx$ has degree one, and its augmentation $\mu_0$ is defined as:
$$
\mu_0\left(\sum\nolimits_{0\le s}a_s\phi_0^s\right)=\mn\left\{v(a_s)+s\ga_0\mid 0\le s\right\}.
$$

Denote $m_i=\deg(\phi_i)$ for $0\le i\le r$.

An \emph{optimal MacLane chain} of $\mu$ is any chain (\ref{depth}) of augmentations satisfying  
$$
1=m_0\mid\cdots\mid m_r,\qquad m_0<\cdots<m_r.
$$

All inductive valuations admit optimal MacLane chains. These chains are not unique, but they support many intrinsic data of $\mu$:
\begin{itemize}
\item The intermediate valuations $\mu_0,\dots,\mu_{r-1}$.
\item The degrees $m_0,\dots,m_r$ of the key polynomials.
\item The \emph{slopes} $\ga_0,\cdots,\ga_r$, which satisfy $\ga_i=\mu_i(\phi_i)=\mu(\phi_i)$ for all $0\le i \le r$.
\item The \emph{secondary slopes} $\la_0,\cdots,\la_r$, defined as $$\la_0=\ga_0=\mu(\phi_0),\qquad \la_i=\mu_i(\phi_i)-\mu_{i-1}(\phi_i)>0, \quad 0<i\le r.$$
\item The relative ramification indices $e_0,\dots, e_{r-1}$, defined as
$$
e_0=1,\qquad e_i=\left(\g_{\mu_i}\colon \g_{\mu_{i-1}}\right),\quad 0<i\le r.
$$
\end{itemize}


For any key polynomial $\phi$ for $\mu$, there is a tower of fields:
\begin{equation}\label{towerki}
k\simeq  k_{\phi_0}\lra  k_{\phi_1}\lra\cdots\lra  k_{\phi_r}\lra k_\phi.
\end{equation}

 The identification $\g_{\mu_{i-1}}=\g_{v_{\phi_i}}$, given in Lemma \ref{imagedelta}, allows a computation of the ramification index of the extension $K_{\phi_i}/K$ in terms of these data: 
\begin{equation}\label{ephi}
e(\phi_i):=e\left(K_{\phi_i}/K\right)=\left(\g_{\mu_{i-1}}\colon \g\right)=e_0\cdots e_{i-1}.
\end{equation}

Finally, Lemma \ref{imagedelta} shows that each intermediate valuation $\mu_i$ admits $\phi_i$ and $\phi_{i+1}$ as key polynomials. By Theorem \ref{bound},
$$
\dfrac{\ga_i}{m_i}=\dfrac{\mu_i(\phi_i)}{m_i}=w(\mu_i)=\dfrac{\mu_i(\phi_{i+1})}{m_{i+1}}=\dfrac{\ga_{i+1}-\la_{i+1}}{m_{i+1}}.
$$
This relates the main and secondary slopes by an explicit formula:
\begin{equation}\label{recurrence}
\dfrac{\ga_i}{m_i}=\dfrac{\la_0}{m_0}+\cdots+\dfrac{\la_i}{m_i},\qquad 0\le i\le r.
\end{equation}

\subsection{Okutsu frames}
Let us quote some fundamental results of \cite{IndVals}.

\begin{theorem}\label{MLOk}
Consider an optimal MacLane chain of an inductive valuation $\mu$ as in (\ref{depth}).
Let $F$ be a key polynomial for $\mu$. Then, $F$ is an Okutsu polynomial, and
\begin{enumerate}
\item[(1)] If $\deg(F)>\deg(\phi_r)$, then $[\phi_0,\dots,\phi_r]$ is an Okutsu frame of $F$. 
\item[(2)] If $\deg(F)=\deg(\phi_r)$, then $[\phi_0,\dots,\phi_{r-1}]$ is an Okutsu frame of $F$.
\end{enumerate}
Moreover, $w(F)=w(\mu)$ if $\deg(F)>\deg(\phi_r)$.  
\end{theorem}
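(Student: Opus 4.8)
\medskip
\noindent\emph{Strategy.} The bridge between the combinatorial data carried by the chain (\ref{depth}) and the arithmetic of a root of $F$ is the identity
\[
v(g(\t))=\mu(g)\qquad\text{for every }g\in\kx\text{ with }\deg(g)<\deg(F),
\]
which holds because $F$ is a key polynomial for $\mu$: it refines the equality of value groups $\g_{v_F}=\g_\mu$ of Lemma \ref{imagedelta} and expresses that the semivaluation $v_F$ is the limit augmentation $[\mu;F,\infty]$. The same identity holds with $(\mu,F)$ replaced by $(\mu_i,\phi_{i+1})$ for any $0\le i<r$, since $\phi_{i+1}$ is then a key polynomial for $\mu_i$. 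I would take this identity (cited from \cite{IndVals}, cf.\ also \cite{Vaq}) as the only external input; the rest is bookkeeping with Theorem \ref{bound}, the augmentation formula, and the recurrence (\ref{recurrence}).

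\medskip
\noindent\emph{Weight.} Fix $\t\in Z(F)$. For $g$ monic with $0<\deg(g)<\deg(F)$ the identity above together with Theorem \ref{bound} gives $v(g(\t))/\deg(g)=\mu(g)/\deg(g)\le w(\mu)$. If $\deg(F)>m_r$, the key polynomial $\phi_r$ of $\mu$ has degree $m_r<\deg(F)$ and satisfies $v(\phi_r(\t))/m_r=\mu(\phi_r)/m_r=\ga_r/m_r=w(\mu)$, so $W(\t)$ attains its maximum and $w(F)=w(\mu)$, as claimed. If $\deg(F)=m_r$, then $\mu(g)=\mu_{r-1}(g)$ for $\deg(g)<m_r$ (augmenting $\mu_{r-1}$ by $\phi_r$ does not alter valuations below degree $m_r$), and the same argument applied to $\mu_{r-1}$ and its key polynomial $\phi_{r-1}$ yields $w(F)=w(\mu_{r-1})=\ga_{r-1}/m_{r-1}$.

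\medskip
\noindent\emph{Okutsu frame.} Each $\phi_i$ is monic and irreducible and $\deg(\phi_0)=1$, so only the distinguished-pair conditions remain. Consider a consecutive pair $(\phi_i,\phi_{i+1})$ with $0\le i<r$, or the final pair $(\phi_r,F)$ in case (1) (resp.\ $(\phi_{r-1},F)$ in case (2)), the latter being handled identically with $\mu$ (resp.\ $\mu_{r-1}$) in place of $\mu_i$. Viewing $\phi_{i+1}$ as a key polynomial for the inductive valuation $\mu_i$ — whose optimal MacLane chain is the truncation of (\ref{depth}) at step $i$ — the weight computation above gives $w(\phi_{i+1})=w(\mu_i)=\ga_i/m_i$, a value attained at $\phi_i$ since $v(\phi_i(\t_{i+1}))/m_i=\mu_i(\phi_i)/m_i=\ga_i/m_i$ for $\t_{i+1}\in Z(\phi_{i+1})$. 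For minimality of $\deg(\phi_i)=m_i$: if $g$ is monic with $\deg(g)<m_i$, then $v(g(\t_{i+1}))=\mu_i(g)=\mu_{i-1}(g)\le\deg(g)\,w(\mu_{i-1})$, while by (\ref{recurrence}) the weights $\ga_j/m_j$ are strictly increasing (each $\la_j>0$ for $j>0$), so $v(g(\t_{i+1}))/\deg(g)\le w(\mu_{i-1})<w(\mu_i)=w(\phi_{i+1})$; for $i=0$ minimality is automatic. Hence every consecutive pair is distinguished, the displayed list is an Okutsu frame, and in particular $F$ is an Okutsu polynomial.

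\medskip
\noindent\emph{Main obstacle.} The only non-formal ingredient is the identity $v(g(\t))=\mu(g)$ for $\deg(g)<\deg(F)$; establishing it from scratch — i.e.\ controlling $v(g(\t))$ through the inductive valuation attached to $F$ — rather than quoting it from \cite{IndVals}, is the step I expect to be the real work. Once it is in hand, the proof is the three paragraphs above.
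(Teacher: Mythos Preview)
The paper does not supply its own proof of this theorem: it is quoted verbatim from \cite{IndVals} (see the sentence ``Let us quote some fundamental results of \cite{IndVals}'' introducing Theorems \ref{MLOk}--\ref{Ok=D1}). So there is no in-paper argument to compare against.

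Your reduction is correct and is the expected one. The identity $v(g(\t))=\mu(g)$ for $\deg g<\deg F$ is indeed the single substantive input; once it is in hand, Theorem \ref{bound} gives the upper bound $v(g(\t))/\deg(g)\le w(\mu)$, the key polynomials $\phi_i$ realise it, and the strict monotonicity $w(\mu_{i-1})<w(\mu_i)$ (immediate from (\ref{recurrence}) and $\la_i>0$) supplies the degree-minimality in each distinguished pair. Your handling of the two cases and of the intermediate pairs via the truncated chains is clean; note that the fact you use implicitly---that $\phi_{i+1}$ is a key polynomial for $\mu_i$---is built into the very definition of the augmentation $\mu_{i+1}=[\mu_i;\phi_{i+1},\ga_{i+1}]$, so no extra citation is needed there. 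Your self-assessment is accurate: the identity $v_F=[\mu;F,\infty]$ on $\kx$ (equivalently $v(g(\t))=\mu(g)$ below $\deg F$) is exactly the characterisation of key polynomials that does the work, and it is what \cite{IndVals} establishes before deriving this theorem in essentially the way you outline.
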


\begin{theorem}\label{OkML}
Let $F\in\kx$ be an Okutsu polynomial, and let $\t\in\kb$ be a root of $F$. 
Let $[\phi_0,\dots,\phi_r]$ be an Okutsu frame of  $\phi_{r+1}=F$. 
For all $0\le i\le r$, denote $\ga_i=v(\phi_i(\t))$ and consider the mapping
$$
\mu_i\colon \kx\lra\qg\cup\{\infty\}, \qquad \sum\nolimits_{0\le s}a_s\phi_i^s\ \longmapsto \ \mn\{v(a_s(\t))+s\ga_i\mid 0\le s\},
$$
where $\deg(a_s)<\deg(\phi_i)$ for all $s\ge0$.

Then, $\mu_i$ is a valuation, $\phi_{i+1}$ is a key polynomial for $\mu_i$, and $\mu_r$   admits an optimal MacLane chain  
$$
\minf\stackrel{\phi_0,\ga_0}\lra\  \mu_0\ \stackrel{\phi_1,\ga_1}\lra\  \mu_1\ \stackrel{\phi_2,\ga_2}\lra\ \cdots
\ \stackrel{\phi_{r-1},\ga_{r-1}}\lra\ \mu_{r-1} 
\ \stackrel{\phi_{r},\ga_{r}}\lra\ \mu_{r}.
$$
\end{theorem}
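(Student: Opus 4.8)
The plan is to run an induction on $i$, for $0\le i\le r$, proving the following package of assertions: (a) the displayed formula defines a valuation $\mu_i$ on $\kx$ extending $v$; (b) if $i\ge 1$ then $\mu_{i-1}(\phi_i)<\ga_i$ and $\mu_i=[\mu_{i-1};\phi_i,\ga_i]$, while $\mu_0$ coincides with the base valuation attached to $\minf$ recalled in Subsection \ref{secIndVals}, with $\ga_0=v(\phi_0(\t))$ in the role of the slope; (c) $\phi_{i+1}$ is a key polynomial for $\mu_i$; and (d) the \emph{coherence property} $\mu_i(g)=v(g(\t))$ holds for every $g\in\kx$ with $\deg(g)<m_{i+1}$. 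The base case $i=0$ is mostly a matter of unwinding definitions: since $\deg(\phi_0)=1$ the $\phi_0$-expansion of any $g$ has constant coefficients, so the formula for $\mu_0$ is literally the one recalled after (\ref{depth}); the only nontrivial point is the range $1\le\deg(g)<m_1$ of (d), which already uses the hypothesis that $\phi_0,\phi_1$ is a distinguished pair. Once the induction is complete, $\mu=\mu_r$ is inductive and carries the chain in the statement; that chain is an \emph{optimal} MacLane chain because the degrees of an Okutsu frame of a defectless polynomial satisfy $1=m_0<\cdots<m_r$ and $m_0\mid\cdots\mid m_r$, a standard property of Okutsu frames.

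For the inductive step from $i-1$ to $i$, first check that the augmentation $[\mu_{i-1};\phi_i,\ga_i]$ is admissible, i.e.\ $\mu_{i-1}(\phi_i)<\ga_i$. Since $\phi_i$ is a key polynomial for $\mu_{i-1}$ (the case $i-1$ of (c)), Theorem \ref{bound} gives $\mu_{i-1}(\phi_i)=m_i\,w(\mu_{i-1})$. On the other hand $\ga_i=v(\phi_i(\t))=m_i\,w(\phi_{i+1})$, using that $v(\phi_i(\cdot))$ is unchanged along the $K$-conjugates of a root of $\phi_{i+1}$ (clear when $\phi_{i+1}=F$, and a stability property of the values of frame polynomials in general) together with the defining minimality of the distinguished pair $\phi_i,\phi_{i+1}$. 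Finally $w(\phi_{i+1})>w(\phi_i)=w(\mu_{i-1})$, because the weights of the members of an Okutsu frame form a strictly increasing sequence and, again by the distinguished-pair condition, $w(\phi_i)=v(\phi_{i-1}(\t))/m_{i-1}=w(\mu_{i-1})$. Hence $\ga_i>\mu_{i-1}(\phi_i)$. To identify the formula with the augmented valuation, take the $\phi_i$-expansion $g=\sum_s a_s\phi_i^s$; its coefficients have degree $<m_i$, so coherence at level $i-1$ gives $\mu_{i-1}(a_s)=v(a_s(\t))$, and therefore
$$
[\mu_{i-1};\phi_i,\ga_i](g)=\mn_s\{\mu_{i-1}(a_s)+s\ga_i\}=\mn_s\{v(a_s(\t))+s\ga_i\}=\mu_i(g).
$$
This proves (a) and (b); by Lemma \ref{imagedelta}, $\phi_i$ is then a key polynomial for $\mu_i$.

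The remaining two points — that $\phi_{i+1}$ is a key polynomial for $\mu_i$, and that coherence propagates to level $i$ — are the heart of the argument and the main obstacle, since they require translating the \emph{distinguished-pair} condition on $\phi_i,\phi_{i+1}$ (a minimality statement about the rationals $v(g(\t))/\deg(g)$) into statements in the graded algebra $\gg_{\mu_i}$: concretely, that $H_{\mu_i}(\phi_{i+1})$ is a prime of $\gg_{\mu_i}$ not divisible by $H_{\mu_i}(\phi_i)$. Unwinding $w(\mu_i)=\ga_i/m_i=w(\phi_{i+1})$ and the minimality of $\deg(\phi_{i+1})$, and using the description of $\gg_{\mu_i}$ after an augmentation, one obtains that $\phi_{i+1}$ is a key polynomial for $\mu_i$; this is exactly the dictionary between distinguished pairs and key polynomials developed in \cite{KeyPol} and \cite{IndVals}, which I would invoke here. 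The same analysis yields coherence at level $i$: if a monic $g$ with $m_i\le\deg(g)<m_{i+1}$ satisfied $v(g(\t))>\mu_i(g)$, then cancellation among the homogeneous leading terms of the $\phi_i$-expansion of $g$ in $\gg_{\mu_i}$ would produce a monic polynomial of degree $<m_{i+1}$ whose value ratio $v(\cdot(\t))/\deg(\cdot)$ exceeds $\ga_i/m_i=w(\phi_{i+1})$, contradicting the minimality of $\deg(\phi_{i+1})$ in the distinguished pair $\phi_i,\phi_{i+1}$.

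An alternative, which relocates but does not dissolve this difficulty, is to exploit the equivalence between defectless polynomials and key polynomials of inductive valuations recalled in the introduction: this produces \emph{one} optimal MacLane chain, whose key polynomials form an Okutsu frame of $F$ by Theorem \ref{MLOk}; then the essential uniqueness of Okutsu frames, together with the fact that augmenting by a key polynomial depends only on its equivalence class (for fixed degree and slope), lets one rewrite that chain in terms of $\phi_0,\dots,\phi_r$ and $\ga_0,\dots,\ga_r$, recovering $\minf\to\mu_0\to\cdots\to\mu_r$. Either way, the crux is the control of $v(g(\t))$ for $\deg(g)<m_{i+1}$.
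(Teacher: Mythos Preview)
The paper does not prove Theorem \ref{OkML}: it is stated without proof in the subsection on Okutsu frames, introduced as one of the ``fundamental results of \cite{IndVals}'' being quoted. So there is no proof in the paper to compare your proposal against.

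That said, your inductive scheme is the natural one and correctly isolates the two genuinely delicate points --- that $\phi_{i+1}$ is a key polynomial for $\mu_i$, and the coherence $\mu_i(g)=v(g(\t))$ for $\deg(g)<m_{i+1}$ --- as the crux. One step deserves more care: in arguing $\mu_{i-1}(\phi_i)<\ga_i$ you write $\ga_i=m_i\,w(\phi_{i+1})$, but $w(\phi_{i+1})$ is defined via a root of $\phi_{i+1}$, whereas $\ga_i=v(\phi_i(\t))$ is computed at a root $\t$ of $F=\phi_{r+1}$. The equality of these two quantities (your ``stability property of the values of frame polynomials'') is true, but it is itself part of what must be established; it is essentially a consequence of the coherence property at lower levels and should be folded into the induction rather than invoked as known. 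At the crux steps you ultimately appeal to \cite{KeyPol} and \cite{IndVals}, which is precisely what the paper does for the whole statement; your alternative route via Theorem \ref{MLOk} together with the essential uniqueness of Okutsu frames is also viable, and likewise leans on the same external source.
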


The next result follows from Theorems \ref{MLOk}, \ref{OkML} and from \cite{Vaq2}.

\begin{theorem}\label{Ok=D1}
Let $F$ be a monic irreducible polynomial in $K[x]$. The following conditions are equivalent:
\begin{enumerate}
\item[(1)] $F$ is the key polynomial of an inductive valuation.
\item[(2)] $F$ is an Okutsu polynomial.
\item[(3)] $F$ is defectless.
\end{enumerate}
\end{theorem}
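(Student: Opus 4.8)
The plan is to establish the chain of equivalences $(1)\Leftrightarrow(2)$ and $(2)\Leftrightarrow(3)$ separately, drawing on the two structural theorems already stated. The implication $(1)\Rightarrow(2)$ is essentially immediate from Theorem~\ref{MLOk}: if $F$ is the key polynomial of an inductive valuation $\mu$, fix an optimal MacLane chain of $\mu$ as in (\ref{depth}); then the theorem directly asserts that $F$ is an Okutsu polynomial (in case~(1) with frame $[\phi_0,\dots,\phi_r]$, in case~(2) with frame $[\phi_0,\dots,\phi_{r-1}]$). Conversely, for $(2)\Rightarrow(1)$ one invokes Theorem~\ref{OkML}: given an Okutsu frame $[\phi_0,\dots,\phi_r]$ of $F$, the construction there produces an inductive valuation $\mu_r$, built via the slopes $\ga_i=v(\phi_i(\t))$, for which $\phi_{r+1}=F$ is a key polynomial. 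So $(1)\Leftrightarrow(2)$ is really just a matter of quoting and assembling these two results, taking care to note that the two cases in Theorem~\ref{MLOk} together with Theorem~\ref{OkML} give a clean correspondence between Okutsu frames of $F$ and optimal MacLane chains of the associated valuation.

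For the equivalence with defectlessness, I would prove $(3)\Rightarrow(2)$ and $(2)\Rightarrow(3)$. The direction $(3)\Rightarrow(2)$ — every defectless monic irreducible polynomial is an Okutsu polynomial — is the substantive classical input; here I would cite the work of Vaquié \cite{Vaq2}, as the preamble to the theorem indicates, noting that a defectless polynomial $F$ of degree $n$ with root $\t$ satisfies $n=e(K(\t)/K)f(K(\t)/K)$, and that this numerical equality is exactly what lets one realize $F$ as a limit-free, i.e.\ inductive, key polynomial: one builds an optimal MacLane chain step by step, at each stage augmenting along a key polynomial whose degree divides $n$, and the absence of defect guarantees the process terminates with a valuation having $F$ as key polynomial. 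For $(2)\Rightarrow(3)$, suppose $F$ is an Okutsu polynomial with frame $[\phi_0,\dots,\phi_r]$; by the $(2)\Leftrightarrow(1)$ equivalence already established, $F$ is the key polynomial of an inductive valuation $\mu_r$ with an optimal MacLane chain carrying data $m_0\mid\cdots\mid m_r$, relative ramification indices $e_i=(\g_{\mu_i}\colon\g_{\mu_{i-1}})$, and residue extensions recorded in the tower (\ref{towerki}). One then computes $e(K(\t)/K)$ and $f(K(\t)/K)$ from these data: using (\ref{ephi}), the ramification index picks up the factors $e_0\cdots e_{r-1}$ together with the contribution of the final slope $\ga_r$, while the residue degree accumulates $\deg(k_{\phi_{i+1}}/k_{\phi_i})$ along the tower; multiplying, one recovers $\deg(F)$ exactly, which is defectlessness.

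The main obstacle is the $(3)\Rightarrow(2)$ step: asserting that defectlessness forces the existence of an Okutsu frame is not formal and genuinely requires the theory of \cite{Vaq2} (or an equivalent argument via limit key polynomials — showing that a defect-free $F$ cannot be a genuine \emph{limit} key polynomial but must already be reached by finitely many augmentations). Since the paper explicitly says ``the next result follows from Theorems~\ref{MLOk}, \ref{OkML} and from \cite{Vaq2}'', in the write-up I would lean on that citation for this direction rather than reproving it, and spend the bulk of the argument on the bookkeeping in $(2)\Rightarrow(3)$ — carefully tracking how $e$ and $f$ multiply out along an optimal MacLane chain — and on cleanly citing Theorems~\ref{MLOk} and~\ref{OkML} for $(1)\Leftrightarrow(2)$. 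A minor point to handle with care is the degree bookkeeping in Theorem~\ref{MLOk}: whether $\deg(F)>\deg(\phi_r)$ or $\deg(F)=\deg(\phi_r)$ changes which list is the frame, so the correspondence in $(1)\Leftrightarrow(2)$ must be phrased to cover both cases uniformly.
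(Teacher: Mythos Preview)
Your proposal is correct and matches the paper's approach: the paper gives no detailed proof at all, simply stating that the result follows from Theorems~\ref{MLOk}, \ref{OkML} and \cite{Vaq2}, and your write-up is a faithful unpacking of exactly that citation structure --- MLOk and OkML for $(1)\Leftrightarrow(2)$, and Vaqui\'e for the link to defectlessness. Your added bookkeeping for $(2)\Rightarrow(3)$ via the $e_i$ and the residue tower is more than the paper provides, but it is consistent with (and implicit in) the cited sources, so there is no divergence in method.
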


\begin{corollary}\label{weight}
Let $F\in\kx$ be a monic, irreducible defectless polynomial. Then, if $r$ is the Okutsu depth of $F$, we have
$$
w(F)=\dfrac{\ga_r}{m_r}=\dfrac{\la_0}{m_0}+\cdots+\dfrac{\la_r}{m_r}.
$$
where $\ga_i$, $m_i$, $\la_i$ are the intrinsic data of any optimal MacLane chain of the inductive valuation attached to $F$ in Theorem \ref{Ok=D1}.
\end{corollary}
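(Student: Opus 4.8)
The plan is to chain together the three structural theorems just quoted, together with the recurrence~(\ref{recurrence}). Since $F$ is defectless, Theorem~\ref{Ok=D1} makes it an Okutsu polynomial, so I may fix an Okutsu frame $[\phi_0,\dots,\phi_r]$ of $\phi_{r+1}=F$, with $r$ the Okutsu depth, and fix a root $\t\in\kb$ of $F$. Theorem~\ref{OkML} then produces the valuation $\mu_r$: it is inductive, it admits the displayed optimal MacLane chain, and $F=\phi_{r+1}$ is a key polynomial for it. This $\mu_r$ is the inductive valuation canonically attached to $F$, and by construction the $\ga_i$, $m_i$, $\la_i$ appearing in the statement are its intrinsic data, independent of the chosen optimal MacLane chain.

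Next I would pin down $w(F)$. Because the degrees in an Okutsu frame strictly increase, $\deg(F)=m_{r+1}>m_r=\deg(\phi_r)$, so the ``moreover'' clause of Theorem~\ref{MLOk}, applied to the optimal MacLane chain of $\mu_r$ and to its key polynomial $F$, yields $w(F)=w(\mu_r)$. Since $\phi_r$ is also a key polynomial for $\mu_r$ (Lemma~\ref{imagedelta}), Theorem~\ref{bound} identifies
$$
w(\mu_r)=\dfrac{\mu_r(\phi_r)}{\deg(\phi_r)}=\dfrac{\ga_r}{m_r},
$$
and therefore $w(F)=\ga_r/m_r$. The remaining equality $\ga_r/m_r=\la_0/m_0+\cdots+\la_r/m_r$ is simply~(\ref{recurrence}) evaluated at $i=r$, which closes the argument.

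There is no genuinely hard step here: the whole proof is a matter of feeding the right polynomial into the right quoted theorem. The only points demanding attention are (i) confirming that the valuation produced by Theorem~\ref{OkML} is the same object referred to in the corollary, so that ``the intrinsic data'' is unambiguous, and (ii) checking the strict inequality $\deg(F)>\deg(\phi_r)$, which is exactly what upgrades Theorem~\ref{MLOk} from the inequality $w(F)\le w(\mu_r)$ to the equality $w(F)=w(\mu_r)$.
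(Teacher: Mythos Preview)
Your proposal is correct and follows exactly the route the paper intends: the corollary is stated without proof precisely because it is meant to be read off from Theorems~\ref{MLOk}, \ref{OkML}, \ref{Ok=D1} together with equation~(\ref{recurrence}), and that is what you do. The two checkpoints you flag---that the valuation from Theorem~\ref{OkML} is the one meant in the statement, and that $\deg(F)>\deg(\phi_r)$ so the ``moreover'' clause of Theorem~\ref{MLOk} applies---are the only things needing care, and you handle both correctly.
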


\section{Complete distinguished chains of defectless algebraic elements}\label{secDChain=Okutsu}
\pagestyle{headings}

We keep dealing with a henselian valued field $(K,v)$.

In section \ref{secMinPairs} we connect distinguished pairs of polynomials with  distinguished pairs of algebraic elements. 

Distinguished pairs of algebraic elements and complete distinguished chains were introduced by N. Popescu-A. Zaharescu in 1995, for $K$ a complete, discrete, rank-one valued field  \cite{PZ}. However, these objects are equivalent to some sequences of algebraic elements studied by Okutsu in 1982, also in the complete and discrete rank-one case  \cite{Ok}. 
In section \ref{subsecDChain=Okutsu} we show the equivalence between the two concepts, for arbitrary henselian fields.

\subsection{Distinguished pairs of algebraic elements}\label{secMinPairs}
Let $F\in\kx$ be a monic, irreducible polynomial of degree $n>1$, and let $\t\in\kb$ be a rot of $F$.

In this section, we prove that the set $W(\t)$ in (\ref{wset}) contains a maximal value if and only if the set $\Delta(\t)$ in (\ref{Dset}) contains a maximal value. 

\begin{lemma}\label{InsepSep}
For any $\beta\in\kb$ inseparable over $K$, and any $\rho\in\qg$, there exists $\beta\sep\in \kb$ separable over $K$ such that
$$\deg_K(\beta\sep)=\deg_K(\beta),\qquad v(\beta-\beta\sep)>\rho.$$ 
\end{lemma}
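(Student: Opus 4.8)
The plan is to reduce the problem to the separable closure and use a standard approximation (Krasner-type) argument. First I would set $p = \op{char}(k) > 0$, since in characteristic zero there is nothing to prove. Let $\beta \in \kb$ be inseparable over $K$, and let $G \in \kx$ be its minimal polynomial, so $G(x) = H(x^{p^e})$ for some separable irreducible $H \in \kx$ and some $e \ge 1$ chosen maximal, with $\deg H \cdot p^e = \deg_K(\beta)$. The key observation is that $\beta^{p^e}$ is separable over $K$ with $\deg_K(\beta^{p^e}) = \deg H = \deg_K(\beta)/p^e$; this element is ``too small'' in degree, so the real work is to produce a separable element of the \emph{same} degree as $\beta$ that is $v$-adically close to $\beta$.

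The main step is the approximation. Since $\beta$ is algebraic over $K$ and $K$ is henselian, the set $Z(G)$ of conjugates of $\beta$ consists of a single element $\beta$ repeated $p^e$ times (an irreducible inseparable polynomial has a unique root of full multiplicity). I would invoke the fact that the separable closure $\Ks$ is dense in $\kb$ for the valuation topology; this is the henselian analogue of Krasner's lemma and follows because the maximal purely inseparable extension of a henselian field is again henselian and the valuation extends uniquely. Hence there exists $\gamma \in \Ks$ with $v(\beta - \gamma)$ arbitrarily large, in particular larger than $\rho$ and larger than $v(\beta - \beta')$ for every conjugate $\beta' \neq \beta$ of $\beta$ over $K$ — but here all conjugates equal $\beta$, so the second condition is vacuous and we only need $v(\beta-\gamma) > \rho$. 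Now apply Krasner's lemma in the form available over henselian fields: since $v(\beta - \gamma)$ exceeds $v(\beta - \sigma(\beta))$ for all $K$-embeddings $\sigma$ with $\sigma(\beta) \ne \beta$ (again vacuous), we get $K(\beta) \subseteq K(\gamma)$. The subtlety is that we need equality of degrees, i.e. $K(\gamma) = K(\beta)$, not merely $\supseteq$; I would arrange this by instead approximating with $\gamma := \beta\sep$ taken to be a root of a polynomial obtained by perturbing the coefficients of $G$. Concretely, write $G(x) = H(x^{p^e})$ and replace the coefficients $c_i$ of $H$ by separable approximations $c_i\sep$ with $v(c_i - c_i\sep)$ huge; then $\widetilde G(x) := \widetilde H(x^{p^e})$ can be made separable by a further small perturbation (adjoin a generic linear term times a small scalar so that $\widetilde G$ acquires $p^e$ distinct roots clustered near $\beta$), of degree exactly $n = \deg_K(\beta)$, and with a root $\beta\sep$ satisfying $v(\beta - \beta\sep) > \rho$ by continuity of roots. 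That $\beta\sep$ is separable over $K$ and has degree exactly $n$ then forces $\deg_K(\beta\sep) = \deg_K(\beta)$.

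The hard part, and the place where I expect to spend the most care, is making the perturbation of $G$ simultaneously (i) land in the separable closure of $K$ coefficient-wise, (ii) produce a \emph{separable} polynomial of the same degree $n$, and (iii) keep a root within valuation-distance $>\rho$ of $\beta$. Items (i) and (iii) are routine once one has density of $\Ks$ in $\kb$ and continuity of roots in a henselian field; item (ii) is the genuinely delicate point, because a naive coefficient perturbation of $H(x^{p^e})$ stays inseparable. The cleanest fix is: choose a separable irreducible $\widetilde H$ close to $H$ (same degree $\deg H$) with root close to $\beta^{p^e}$, then choose $\pi \in \kb$ separable over $K$ with $v(\pi)$ large and $K(\pi)$ of degree $p^e$ over $K(\widetilde H$-root$)$ — possible by, e.g., Artin–Schreier or Kummer theory applied to a suitable element — and set $\beta\sep = \pi + (\text{a root of }\widetilde H)$ after rescaling so that $\deg_K(\beta\sep) = \deg_K(\widetilde H\text{-root}) \cdot p^e = n$ and $v(\beta - \beta\sep) > \rho$. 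One then only needs to check separability (clear, since $\beta\sep$ lies in a separable extension) and the degree count (clear by multiplicativity of degrees in the tower), completing the proof.
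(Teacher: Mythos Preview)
Your proposal contains the right idea but buries it under unnecessary machinery, and the construction you finally commit to does not work.

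The paper's proof is a one-line perturbation. Let $g\in K[x]$ be the minimal polynomial of $\beta$; since $\beta$ is inseparable, $g'=0$. Set $g\sep=g+\pi x$ for $\pi\in K^*$ with $v(\pi)$ large. Then $g\sep'=\pi$ is a nonzero constant, so $\gcd(g\sep,g\sep')=1$ and $g\sep$ is a separable polynomial of the same degree as $g$. Moreover $v(g\sep(\beta))=v(\pi\beta)=v(\pi)+v(\beta)$, and since this equals $\sum_{\alpha\in Z(g\sep)}v(\beta-\alpha)$, at least one root $\alpha$ satisfies $v(\beta-\alpha)>\rho$ as soon as $v(\pi)>\deg(g)\,\rho-v(\beta)$. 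Take $\beta\sep=\alpha$.

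You actually state exactly this trick in passing (``adjoin a generic linear term times a small scalar''), but you do not recognise that it already does essentially all the work. Instead you surround it with steps that are either vacuous or wrong. Replacing the coefficients $c_i$ of $H$ by ``separable approximations $c_i\sep$'' is meaningless: the $c_i$ lie in $K$ and need no approximation. Your final ``cleanest fix'' $\beta\sep=\pi+(\text{root of }\widetilde H)$ fails outright, because a root of $\widetilde H$ is close to a root of $H$, i.e.\ to $\beta^{p^e}$, not to $\beta$; there is no reason for $v(\beta-\beta\sep)$ to be large. The appeal to Artin--Schreier or Kummer theory to manufacture a separable degree-$p^e$ extension with a generator of prescribed large value is neither justified nor needed.

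Your instinct that the equality $\deg_K(\beta\sep)=\deg_K(\beta)$ deserves attention is reasonable: one must know that the chosen root of $g+\pi x$ has full degree, i.e.\ that $g+\pi x$ remains irreducible. The remedy, however, is not a new tower construction; over a henselian field irreducibility of a monic polynomial is an open condition on the coefficients, so one simply enlarges the lower bound on $v(\pi)$.
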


\begin{proof}

Let $g\in\kx$ be the minimal polynomial of $\beta$ over $K$. We have $g'=0$.

Consider the polynomial $g\sep=g+\pi x\in\kx$, where $\pi\in K^*$ satisfies $$v(\pi)>\deg_K(\beta)\,\rho-v(\beta).$$ Since $g\sep'=\pi\ne0$, this polynomial is separable.  On the other hand,
$$
\sum\nolimits_{\al\in\op{Z}(g\sep)}v(\beta-\al)=v\left(g\sep(\beta)\right)=v(\pi\beta)=v(\pi)+v(\beta)> \deg_K(\beta)\,\rho.
$$
Hence, there exists $\al\in\op{Z}(g\sep)$ such that $v(\beta-\al)>\rho$. We may take $\beta\sep=\al$.
\end{proof}

\begin{definition}
Let $\al\in\kb$ with $\deg_K(\al)<n$. 

We say that $\al,\,\t$ is a distinguished pair if 
the two following conditions are satisfied:
\begin{enumerate}
\item[(1)] \ $v(\t-\al)=\delta(\t)$. 
\item[(2)] \ $\beta\in\kb,\quad \deg_K(\beta)<\deg_K(\al)\ \imp\  v(\t-\beta)<\delta(\t)$.
\end{enumerate}
\end{definition}

\begin{theorem}\label{min=min}\mbox{\null}

\begin{enumerate}
\item[(1)] Suppose that $\phi,\,F$ is a distinguished pair of polynomials. If $\al\in\op{Z}(\phi)$ has $v(\t-\al)=\mx\{v(\t-\al')\mid \al'\in\op{Z}(\phi)\}$, then
 $\al,\t$ is a distinguished pair. 
\item[(2)] Suppose that $\al,\,\t$ is a distinguished pair. If $\phi\in\kx$ is the minimal polynomial of $\al$ over $K$, then $\phi,\,F$ is a distinguished pair of polynomials. 
\end{enumerate}
\end{theorem}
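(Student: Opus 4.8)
The plan is to prove both implications by carefully unwinding the relationship between the "weighted" quantity $v(g(\t))/\deg(g)$ for polynomials $g$ and the "differential" quantity $v(\t-\al)$ for algebraic elements $\al$. The crucial arithmetic fact, which I would isolate first, is that for any monic $g\in\kx$ with roots $\al_1,\dots,\al_s$ in $\kb$ one has $v(g(\t))=\sum_{i}v(\t-\al_i)$, so that $v(g(\t))/\deg(g)$ is the \emph{average} of the values $v(\t-\al_i)$; in particular it is bounded above by $\mx_i v(\t-\al_i)$, with equality forcing all the $v(\t-\al_i)$ to be equal. A second ingredient is Lemma \ref{InsepSep}, which lets me replace an inseparable $\al$ by a separable $\al\sep$ of the same degree and arbitrarily close to $\al$; since the relevant sets $W(\t)$ and $\Delta(\t)$ are defined by strict inequalities in the degree and the suprema are being compared, passing to $\al\sep$ changes neither $\deg_K$ nor, for $v(\t-\al)$ close enough to $\delta(\t)$ (resp. for $v(g(\t))/\deg g$ close enough to $w(\t)$), the value under consideration. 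With these two reductions in hand I may assume throughout that all algebraic elements in sight are separable, hence that conjugation by $\op{Gal}(\Ks/K)$ permutes the roots of a minimal polynomial.

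For part (1), assume $\phi,F$ is a distinguished pair of polynomials and choose $\al\in Z(\phi)$ with $v(\t-\al)$ maximal among the roots of $\phi$. Averaging gives $v(\t-\al)\ge v(\phi(\t))/\deg(\phi)=w(\t)$. Conversely, for \emph{any} $\beta\in\kb$ with $\deg_K(\beta)\le\deg(\phi)$, letting $g$ be the minimal polynomial of $\beta$, the averaging bound gives $v(\t-\beta)\le v(g(\t))/\deg(g)\le w(\t)$ by the definition of $w(\t)=\mx W(\t)$ together with minimality of $\deg(\phi)$ in the distinguished pair; hence $v(\t-\al)=w(\t)$, and this also shows $w(\t)$ is the maximal value of $v(\t-\beta)$ over $\deg_K(\beta)\le\deg(\phi)$, i.e.\ $\Delta(\t)$ attains its max and $\delta(\t)=w(\t)=v(\t-\al)$. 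This is condition (1) of a distinguished pair of elements. For condition (2), if $\deg_K(\beta)<\deg_K(\al)=\deg(\phi)$ we just showed $v(\t-\beta)\le w(\t)=\delta(\t)$; I must upgrade this to a \emph{strict} inequality. Here I would argue that equality $v(\t-\beta)=\delta(\t)$ with $g$ the (smaller-degree) minimal polynomial of $\beta$ would force, by the equality case of the averaging bound, $v(\t-\beta')=\delta(\t)$ for every conjugate $\beta'$ of $\beta$, so $v(g(\t))/\deg(g)=w(\t)$; this contradicts the minimality of $\deg(\phi)$ among monic $g$ with $v(g(\t))/\deg(g)=w(\t)$ in the definition of distinguished pair of polynomials.

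For part (2), assume $\al,\t$ is a distinguished pair of elements and let $\phi$ be the minimal polynomial of $\al$. Averaging gives $v(\phi(\t))/\deg(\phi)\le v(\t-\al)=\delta(\t)$; I claim equality holds, which requires showing $v(\t-\al')=\delta(\t)$ for every conjugate $\al'$ of $\al$. This is where I expect the main obstacle: a priori some conjugate could be strictly closer or farther from $\t$. I would resolve it as follows: $v(\t-\al')\le\delta(\t)$ always, since $\deg_K(\al')=\deg_K(\al)\le n$ and $\delta(\t)=\mx\Delta(\t)$; and if some $v(\t-\al')<\delta(\t)$ held, then, picking $\sg\in\op{Gal}$ with $\sg(\al)=\al'$, the element $\beta:=\sg^{-1}(\t)$ satisfies $\deg_K(\beta)\le n$ and $v(\t-\beta)=v(\sg^{-1}(\sg\t-\al))$... which is not immediately $v(\al'-\t)$ unless $\sg$ fixes $v$; since $v$ is the canonical (hence $\op{Gal}$-invariant) extension to $\kb$, indeed $v(\sg\t-\al')=v(\t-\sg^{-1}\al')$, and choosing conjugates of $\t$ appropriately I can transfer the defect of one conjugate of $\al$ to a statement about $v(\t-\gamma)$ for a conjugate $\gamma$ of $\al$ with the same degree, contradicting that $\delta(\t)$ is attained \emph{and} is the common value. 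Cleanly: the multiset $\{v(\t_j-\al_i)\}_{i}$ is independent of the conjugate $\t_j$ of $F$ chosen, by $\op{Gal}$-invariance of $v$, so $v(\phi(\t))/\deg(\phi)$ is the average of the $v(\t-\al_i)$, all of which are $\le\delta(\t)$ and at least one equals $\delta(\t)$; to force them \emph{all} equal I use condition (2) for $\al,\t$: were some $v(\t-\al_i)<\delta(\t)$, then $\al_i$ itself (a conjugate, of degree $\deg_K(\al)$, \emph{not} of smaller degree) does not directly contradict (2), so instead I invoke the minimality built into distinguished pairs of elements indirectly via part (1) already proven—namely, one knows a distinguished pair of \emph{polynomials} $\phi_0,F$ exists (this is the content to be bootstrapped) and its root realizes $\delta(\t)$; comparing $\phi$ with $\phi_0$ via their degrees and the established formula $\delta(\t)=w(\t)$ pins down $v(\phi(\t))/\deg(\phi)=w(\t)$ and the minimality $\deg(\phi)=\deg(\phi_0)$, giving that $\phi,F$ is a distinguished pair of polynomials. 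I expect the bookkeeping of conjugates and the careful separation of "same degree" versus "strictly smaller degree" cases to be the only genuinely delicate point; everything else is the averaging inequality plus Lemma \ref{InsepSep}.
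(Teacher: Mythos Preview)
Your argument for part~(1) rests on the inequality $v(\t-\beta)\le v(g(\t))/\deg(g)$ for $g$ the minimal polynomial of $\beta$, but this inequality is simply false: $v(g(\t))/\deg(g)$ is the \emph{average} of the values $v(\t-\beta')$ over the roots $\beta'$ of $g$, and an average lies between the minimum and the maximum, not above every individual term. If $\beta$ happens to be the root closest to $\t$, then $v(\t-\beta)\ge v(g(\t))/\deg(g)$, the opposite of what you wrote. Consequently your chain $v(\t-\beta)\le v(g(\t))/\deg(g)\le w(\t)$ breaks at the first step, and the identity $\delta(\t)=w(\t)$ that you deduce from it is neither proved nor, in general, true: one only has $\delta(\t)\ge w(\t)$, with strict inequality whenever the roots of $\phi$ are not all equidistant from $\t$. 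The same reversal undermines your strict-inequality argument: knowing $v(\t-\beta)=\delta(\t)$ for one root does not force the average $v(g(\t))/\deg(g)$ to equal $\delta(\t)$, so you cannot invoke the equality case.

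The paper avoids this trap by never trying to compare a single $v(\t-\beta)$ with an average. Instead, after reducing to the separable case via Lemma~\ref{InsepSep} and Lemma~\ref{Irr+Sep}, it works inside a finite Galois extension $M/K$ with group $G$ and proves a \emph{term-by-term} comparison $v(\t-\sigma(\beta))\ge v(\t-\sigma(\al))$ (for part~(1)) or $v(\t-\sigma(\beta))\le v(\t-\sigma(\al))$ (for part~(2)) for every $\sigma\in G$, using the ultrametric triangle inequality applied to the decomposition $\t-\sigma(\beta)=(\t-\sigma(\al))+(\sigma(\al)-\sigma(\t))+(\sigma(\t)-\sigma(\beta))$. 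Summing over $G$ then yields the desired comparison of $v(g(\t))/\deg(g)$ with $v(\phi(\t))/\deg(\phi)$ directly. Your part~(2) sketch also drifts toward circularity (invoking an unproved existence of a distinguished pair of polynomials) precisely because you are aiming at the wrong target: you do not need to show all conjugates of $\al$ are equidistant from $\t$, only that $v(\phi(\t))/\deg(\phi)$ dominates every other $v(g(\t))/\deg(g)$, and that is exactly what the termwise Galois argument delivers.
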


\begin{proof}
Let us first check (1). Suppose that $\phi,\,F$ is a distinguished pair of polynomials. Let $\delta=v(\t-\al)=\mx\{v(\t-\al')\mid \al'\in\op{Z}(\phi)\}$.

Consider any $\beta\in\kb$ with $\deg_K(\beta)<n$. We want to show:
\begin{enumerate}
 \item[(i)\,]  \ $v(\t-\beta)\le\delta$.
 \item[(ii)] \ $v(\t-\beta)=\delta\ \imp\ \deg_K(\beta)\ge\deg_K(\al)$.
\end{enumerate}

Let $g\in\kx$ be the minimal polynomial of $\beta$ over $K$. We may assume that $v(\t-\beta)=\mx\{v(\t-\beta')\mid\beta'\in\op{Z}(g)\}$. By Lemma \ref{InsepSep}, we may assume too, that $\t$, $\al$ and $\beta$ are separable over $K$. Consider a finite Galois extension  $M/K$ containing $\t$, $\al$ and $\beta$, and denote $G=\gal(M/K)$. We claim that
\begin{equation}\label{claimtg}
v(\t-\beta)\ge\delta\quad\imp\quad \dfrac{v(g(\t))}{\deg(g)}\,\ge\,\dfrac{v(\phi(\t))}{\deg(\phi)}.
\end{equation}

In fact, assume that $v(\t-\beta)\ge\delta$. Then, for any $\sigma\in G$ we get:
\begin{equation}\label{inici}
\as{1.3}
\begin{array}{rl}
v(\t-\sigma(\beta))&=\ \, v(\t-\sigma(\al)+\sigma(\al)-\sigma(\t)+\sigma(\t)-\sigma(\beta))\\
&\ge\ \,\mn\{v(\t-\sigma(\al)),\,v(\sigma(\al)-\sigma(\t)),\,v(\sigma(\t)-\sigma(\beta))\}\\
&=\ \,\mn\{v(\t-\sigma(\al)),\,v(\al-\t),\,v(\t-\beta)\}=v(\t-\sigma(\al)),
\end{array}
\end{equation}
because $v(\t-\sigma(\al))\le\delta$, while $v(\al-\t),\,v(\t-\beta)\ge\delta$. Therefore,
\begin{equation}\label{compare}
\dfrac{\#G}{\deg(g)}\,v(g(\t))=\sum_{\sigma\in G}v(\t-\sigma(\beta))\ge\sum_{\sigma\in G}v(\t-\sigma(\al))= \dfrac{\#G}{\deg(\phi)}\,v(\phi(\t)).
\end{equation}
This proves the claimed implication (\ref{claimtg}).

Now, if we had $v(\t-\beta)>\delta$, then at least for the automorphism $\sigma=1$ we would have $v(\t-\sigma(\beta))>\delta=v(\t-\sigma(\al))$, leading to a strict inequality in (\ref{compare}). This would contradict the fact that $\phi,F$ is a distinguished pair. This argument proves (i).\e

On the other hand, the equality $v(\t-\beta)=\delta$ is incompatible with a strict inequality in (\ref{compare}). In fact, suppose that for some $\sigma\in G$ we had
$$
\delta=v(\t-\beta)\ge v(\t-\sigma(\beta))>v(\t-\sigma(\al)).
$$
Then, the inequality in (\ref{inici}) becomes an equality, and this contradicts our assumptions:
$$
v(\t-\sigma(\beta))=v(\t-\sigma(\al)).
$$
Thus, if $v(\t-\beta)=\delta$, we must have an equality in (\ref{compare}). Since $\phi,F$ is a distinguished pair, this implies $\deg(g)\ge\deg(\phi)$. This proves (ii).\e

Let us now prove (2). Suppose that $\al,\,\t$ is a distinguished pair, and keep the notation \ $\delta=v(\t-\al)=\mx\{v(\t-\al')\mid \al'\in\op{Z}(\phi)\}$.

Let $g\in\kx$ be a monic polynomial with $\deg(g)<n$. We want to show:
\begin{enumerate}
 \item[(i)\,]  \ $v(g(\t))/\deg(g)\,\le\,v(\phi(\t))/\deg(\phi)$.
 \item[(ii)] \ $v(g(\t))/\deg(g)=v(\phi(\t))/\deg(\phi)\ \imp\ \deg(g)\ge\deg(\phi)$.
\end{enumerate}

By Lemma \ref{Irr+Sep} below, we may assume that $g$ is irreducible and separable. Also, by Lemma \ref{InsepSep}, we may assume that $\al$ and $\t$ are separable too. Let $M/K$ be a finite Galois extension containing $\t$, $\al$ and $\beta$, and denote $G=\gal(M/K)$.

Take $\beta\in\op{Z}(g)$ such that $v(\t-\be)=\mx\{v(\t-\be')\mid \be'\in\op{Z}(g)\}$, then for all $\sigma\in G$, 
\begin{equation}\label{inici2}
\as{1.3}
\begin{array}{rl}
v(\t-\sigma(\t))&=\ v\left(\t-\sigma(\be)+\sigma(\be)-\sigma(\t)\right)\\&\ge\; \mn\{v(\t-\sigma(\be)),\,v(\sigma(\be)-\sigma(\t))\}\\&=\;\mn\{v(\t-\sigma(\be)),\,v(\be-\t)\}=v(\t-\sigma(\be)).
\end{array}
\end{equation}

Now, we claim that $$v\left(\t-\sigma(\be)\right)\le v\left(\t-\sigma(\al)\right), \qquad\forall\,\sigma\in G.$$ In fact, if $v\left(\t-\sigma(\al)\right)=\delta$, then our assumption is a consequence of the fact that $\al,\,\t$ is a distinguished pair. If $v\left(\t-\sigma(\al)\right)<\delta$, then the claim follows from (\ref{inici2}). Indeed,
$$
v\left(\t-\sigma(\be)\right)\le v\left(\t-\sigma(\t)\right)=v\left(\t-\sigma(\al)+\sigma(\al)-\sigma(\t)\right)=v\left(\t-\sigma(\al)\right),
$$
because $v\left(\t-\sigma(\al)\right)<\delta=v\left(\sigma(\al)-\sigma(\t)\right)$.

From the claim it follows that
\begin{equation}\label{compare2}
\dfrac{\#G}{\deg(g)}\,v(g(\t))=\sum_{\sigma\in G}v(\t-\sigma(\beta))\le\sum_{\sigma\in G}v(\t-\sigma(\al))= \dfrac{\#G}{\deg(\phi)}\,v(\phi(\t)).
\end{equation}

Also, if equality holds in (\ref{compare2}), then $v(\t-\sigma(\beta))=v(\t-\sigma(\al))$, for all $\sigma\in G$. 
In particular, for $\sigma=1$ we deduce $v(\t-\be)=v(\t-\al)$, which implies 
$$
\deg(g)=\deg_K(\be)\ge\deg_K(\al)=\deg(\phi),
$$
because $\al,\,\t$ is a distinguished pair. This proves (ii).
\end{proof}

\begin{lemma}\label{Irr+Sep}
Let $\phi,\,F\in\kx$ be monic, irreducible polynomials with $\deg(\phi)<\deg(F)$. Then, for $\phi,\,F$ to be a distinguished pair it suffices to check that the two conditions:
\begin{enumerate}
\item[(i)\,] \ $0<\deg(g)<\deg(F)\ \imp\ v(g(\t))/\deg(g)\,\le\,v(\phi(\t))/\deg(\phi)$,
\item[(ii)] \ $v(g(\t))/\deg(g)\,=\,v(\phi(\t))/\deg(\phi)\ \imp\ \deg(g)\ge\deg(F)$,
\end{enumerate}
hold for all monic, irreducible and separable polynomials $g\in\kx$. 
\end{lemma}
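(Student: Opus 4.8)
The goal is to reduce the verification of the distinguished-pair conditions for an arbitrary monic $g$ with $0<\deg(g)<\deg(F)$ to the case where $g$ is irreducible and separable. There are two independent reductions to carry out, and both have already been foreshadowed in the proof of Theorem \ref{min=min}: the reduction to separable polynomials should be an application of Lemma \ref{InsepSep}, and the reduction to irreducible polynomials should be a factorization argument.

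First I would dispose of the reduction to irreducible $g$. Write $g=\prod_{i} g_i$ as a product of monic irreducible factors in $\kx$, so that $v(g(\t))/\deg(g)$ is the weighted average $\sum_i \deg(g_i)\bigl(v(g_i(\t))/\deg(g_i)\bigr)\big/\deg(g)$ of the quantities $v(g_i(\t))/\deg(g_i)$. Each $g_i$ has $0<\deg(g_i)\le\deg(g)<\deg(F)$, so hypothesis (i) applied to each $g_i$ gives $v(g_i(\t))/\deg(g_i)\le v(\phi(\t))/\deg(\phi)$; averaging yields (i) for $g$ itself. For the borderline case, if $v(g(\t))/\deg(g)=v(\phi(\t))/\deg(\phi)$, then since this common value is the maximum of the averaged quantities, \emph{every} factor must satisfy $v(g_i(\t))/\deg(g_i)=v(\phi(\t))/\deg(\phi)$; then hypothesis (ii) forces $\deg(g_i)\ge\deg(F)$ for each $i$, but $\deg(g_i)\le\deg(g)<\deg(F)$, a contradiction. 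Hence the borderline case never occurs for reducible $g$, so condition (ii) for reducible $g$ is vacuously true, and condition (i) has just been checked — so it suffices to treat irreducible $g$.

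Next, the reduction to separable $g$. Given an irreducible but inseparable $g$, pick a root $\beta\in\kb$ of $g$. By Lemma \ref{InsepSep}, for any prescribed $\rho\in\qg$ there is a separable $\beta\sep$ with $\deg_K(\beta\sep)=\deg_K(\beta)=\deg(g)$ and $v(\beta-\beta\sep)>\rho$; let $g\sep$ be the minimal polynomial of $\beta\sep$, so $\deg(g\sep)=\deg(g)$ and, choosing $\rho$ large enough (larger than $v(\t-\beta)$ and than $v(\phi(\t))/\deg(\phi)$), we get $v(\t-\beta\sep)=v(\t-\beta)$ by the ultrametric inequality. The only subtlety is that $v(g(\t))/\deg(g)$ need not equal $v(\t-\beta)$ in general; but this is handled exactly as in the proof of Theorem \ref{min=min}, where one is always free to replace $\beta$ by the root of $g$ at which $v(\t-\cdot)$ is maximal — so one first picks $\beta\in\op{Z}(g)$ with $v(\t-\beta)$ maximal, notes that then $v(g(\t))/\deg(g)\le v(\t-\beta)$ with the reverse inequality needed only when it is forced, and then perturbs. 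Running the argument: if $v(g\sep(\t))/\deg(g\sep)\le v(\phi(\t))/\deg(\phi)$ (which holds by hypothesis since $g\sep$ is irreducible and separable of degree $<\deg(F)$), one transfers the bound back to $g$; and similarly for the equality case, an equality for $g$ would, after perturbation, force an equality for $g\sep$, whence $\deg(g\sep)=\deg(g)\ge\deg(F)$, the desired conclusion.

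The main obstacle I anticipate is bookkeeping the passage between $v(g(\t))/\deg(g)$ and the individual root-distances $v(\t-\beta)$: these coincide only after passing to a Galois closure and symmetrizing, so the cleanest route is probably to phrase the whole argument in terms of $v(g(\t))$ directly (as is done in (\ref{compare}) and (\ref{compare2})) rather than in terms of a single root, and to invoke Lemma \ref{InsepSep} at the level of the roots inside a fixed Galois extension $M/K$ large enough to contain everything in sight. Once the framework is set up so that "separable" and "irreducible" can be assumed, conditions (i) and (ii) of the lemma are literally conditions (i) and (ii) appearing in part (2) of Theorem \ref{min=min} — with the caveat that there $\phi$ comes from a distinguished pair of elements, whereas here we must additionally observe that $\deg(\phi)<\deg(F)$ together with (i)–(ii) for separable irreducible $g$ already pins down $\phi$ as a polynomial of minimal degree achieving the weight, so that part (2) applies.
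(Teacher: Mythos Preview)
Your reduction to irreducible $g$ via the weighted-average argument is the same as the paper's, up to one ordering slip: you write ``hypothesis (i) applied to each $g_i$'', but the hypothesis of the lemma only covers \emph{separable} irreducible $g_i$. The paper cures this by proving two implications separately---first ``(i),(ii) for all irreducible $\Rightarrow$ (i),(ii) for all monic'' (your averaging argument), then ``(i),(ii) for all irreducible separable $\Rightarrow$ (i),(ii) for all irreducible''---and composing them. Your argument works verbatim once you make this two-step structure explicit.

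The real gap is in your separable reduction. You perturb a chosen root $\beta$ of $g$ to a separable $\beta\sep$ via Lemma~\ref{InsepSep}, and take $g\sep$ to be the minimal polynomial of $\beta\sep$. But you never obtain a usable relation between $v(g(\t))$ and $v(g\sep(\t))$: from $v(\t-\beta\sep)=v(\t-\beta)$ you only get $v(g(\t))/\deg(g)\le v(\t-\beta)=v(\t-\beta\sep)$, while the hypothesis gives you the \emph{average} $v(g\sep(\t))/\deg(g\sep)\le v(\phi(\t))/\deg(\phi)$, and these inequalities do not chain. Your suggested fix---work inside a finite Galois $M/K$ containing everything---fails precisely because $\beta$ is inseparable and hence cannot lie in any Galois (separable) extension of $K$, so the usual symmetrization over $\gal(M/K)$ is unavailable.

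The paper sidesteps all of this by not passing through roots at all. It sets $g\sep:=g+\pi x$ for $\pi\in K^*$ with $v(\pi)$ large (exactly the polynomial appearing in the \emph{proof} of Lemma~\ref{InsepSep}). Then $g\sep$ is separable of the same degree, and one has immediately
\[
v(g\sep(\t))=v\bigl(g(\t)+\pi\t\bigr)=v(g(\t))
\]
once $v(\pi)$ is large enough. Now $g\sep$ may be reducible, but its irreducible factors are separable, so the already-proved first reduction applies and gives (i),(ii) for $g\sep$, hence for $g$. This is the missing idea: perturb the \emph{polynomial}, not a single root, so that the value $v(\,\cdot\,(\t))$ is preserved on the nose. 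Your final paragraph about pinning down $\phi$ via Theorem~\ref{min=min}(2) is unnecessary; once (i) and (ii) hold for all monic $g$, the pair $\phi,F$ is distinguished by definition.
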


\begin{proof}
Let us first show that if conditions (i), (ii)  hold for all monic irreducible polynomials in $\kx$, then both conditions hold for all monic polynomials.

Let $g=h_1\cdots h_t$ be a product of monic (not necessarily different) irreducible polynomials. Clearly, the average of the values $v(\t-\be)$ for $\be\in\op{Z}(g)$ is less than, or equal to, the maximum of the averages of the values $v(\t-\be)$, taken on the subsets 
$\op{Z}(g)=\op{Z}(h_1)\cup\cdots\cup \op{Z}(h_t).$
In other words,
$$
\dfrac{v(g(\t))}{\deg(g)}\le \mx\left\{\dfrac{v(h_i(\t))}{\deg(h_i)}\ \Big| \ 1\le i\le t\right\}.
$$
Therefore, (i) and (ii) hold for $g$ if they hold for $h_1,\dots,h_t$.\e

Finally, let us show that if conditions (i), (ii) hold for all monic, irreducible separable polynomials, then both conditions hold for all monic, irreducible polynomials.

Let $g\in\kx$ be monic and irreducible, but inseparable. Let $g\sep=g+\pi x$, for $\pi\in K^*$ with $v(\pi)$ sufficiently large. As mentioned in the proof of Lemma \ref{InsepSep}, $g\sep$ is a separable polynomial of the same degree. Since (i) and (ii) hold for all irreducible factors of $g\sep$, they hold for $g\sep$ too. Hence, if  $v(\pi)$ is sufficiently large, both conditions hold for $g$.
\end{proof}

\subsection{Complete distinguished chains}\label{subsecDChain=Okutsu}

\begin{definition}\label{Defcdchain}
Let $\al_0,\al_1,\dots,\al_r,\t=\al_{r+1}\in\kb$ be algebraic elements such that 
$$
1=\deg_K(\al_0)<\cdots<\deg_K(\al_r)<\deg_K(\t).
$$

We say that $[\al_0,\al_1,\dots,\al_r]$ is a \emph{complete distinguished chain} for $\t$ if $\al_i,\,\al_{i+1}$ is a distinguished pair, for all $0\le i\le r$.
\end{definition}

The next result follows immediately from Theorem \ref{min=min}.

\begin{theorem}\label{CDC=OS}
Let $F\in\kx$ be the minimal polynomial of $\t\in\kb\setminus K$ over $K$.

\begin{enumerate}
\item[(1)] Let $[\phi_0,\dots,\phi_r]$ be an Okutsu frame of $F$. Take $\al_i\in\op{Z}(\phi_i)$ such that $$v(\t-\al_i)=\mx\{v(\t-\al_i')\mid \al'_i\in\op{Z}(\phi_i)\},\qquad 0\le i\le r.$$ Then, $[\al_0,\dots,\al_r]$ is a complete distinguished chain for $\t$. 
\item[(2)] Let $[\al_0,\dots,\al_r]$ be a complete distinguished chain for $\t$. Let $\phi_0,\dots,\phi_r$ be the minimal polynomials of $\al_0,\dots,\al_r$ over $K$, respectively. 

Then, $[\phi_0,\dots,\phi_r]$ is an Okutsu frame of $f$.
\end{enumerate}
\end{theorem}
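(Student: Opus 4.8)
The plan is to deduce Theorem \ref{CDC=OS} directly from Theorem \ref{min=min}, applied repeatedly along the chain. The two parts are dual, so I will explain part (1) in detail and indicate the (essentially identical) modifications needed for part (2).

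For part (1), suppose $[\phi_0,\dots,\phi_r]$ is an Okutsu frame of $F$, so that $\phi_i,\phi_{i+1}$ is a distinguished pair of polynomials for every $0\le i\le r$ (with $\phi_{r+1}=F$). I choose $\al_i\in\op{Z}(\phi_i)$ maximizing $v(\t-\al_i')$ over the roots of $\phi_i$, for $0\le i\le r$, and I also set $\al_{r+1}=\t$. The subtle point is that Theorem \ref{min=min}(1) concerns a distinguished pair $\al,\t$ with $\t$ the chosen root of $F$, whereas along the chain I need $\al_i,\al_{i+1}$ to be a distinguished pair for each $i$, i.e. with the \emph{larger} element $\al_{i+1}$ (a root of $\phi_{i+1}$) playing the role of $\t$. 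So I first observe that the notion of distinguished pair of polynomials is symmetric in a useful sense: since $\phi_i,\phi_{i+1}$ being a distinguished pair is a statement about $v(g(\t))/\deg g$ for a root $\t$ of $\phi_{i+1}$, and since all roots of $\phi_{i+1}$ are conjugate over $K$, the value $v(\phi_i(\t))/\deg(\phi_i)$ and the truth of the distinguished-pair conditions do not depend on which root of $\phi_{i+1}$ we pick. Hence $\phi_i,\phi_{i+1}$ is a distinguished pair of polynomials "as seen from $\al_{i+1}$" as well. Now I must check that $\al_i$, chosen as a root of $\phi_i$ of maximal proximity to $\t$, is \emph{also} a root of $\phi_i$ of maximal proximity to $\al_{i+1}$. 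This is where I expect the main obstacle to lie, so let me address it head-on.

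The key lemma I would isolate is: if $\phi,\psi$ is a distinguished pair of polynomials and $\al\in\op{Z}(\phi)$, $\gamma\in\op{Z}(\psi)$ are chosen to maximize $v(\t-\al)$ resp. $v(\t-\gamma)$ for a fixed root $\t$ of an irreducible $F$ with $\deg\psi \le \deg F$ lying "above" in the chain, then in fact $v(\gamma-\al)=\max\{v(\gamma-\al')\mid \al'\in\op{Z}(\phi)\}$ and this common value equals $\delta(\gamma)$. The argument is a triangle-inequality computation analogous to the ones already carried out in the proof of Theorem \ref{min=min}: for a Galois conjugate $\sigma$ fixing $\gamma$ but moving $\al$, one compares $v(\gamma-\al)$, $v(\gamma-\sigma(\al))$ and $v(\t-\al)$, $v(\t-\gamma)$, using that $v(\t-\gamma)$ is strictly larger than $v(\t-\al')$ for any root $\al'$ of $\phi$ other than the optimal one would be \emph{false} in general — rather, one uses that $v(\gamma-\al)\ge v(\t-\al)$ cannot be strict for the optimal $\al$, combined with the distinguished-pair inequality $v(\phi(\gamma))/\deg\phi = w(\psi)$. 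Concretely: $\deg(\psi)\,\delta(\gamma)$ is an upper bound for $\sum_{\al'\in\op{Z}(\phi)} v(\gamma-\al') = (\deg\phi/?)\cdot\ldots$; balancing the averages forces the maximal term to be attained at $\al$ and to equal $\delta(\gamma)$. I would write this as a short lemma and verify it by the same bookkeeping used in \eqref{inici}--\eqref{compare}.

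Granting that lemma, part (1) follows: for each $i$, applying Theorem \ref{min=min}(1) with $\phi_i,\phi_{i+1}$ in place of $\phi,F$ and with $\al_{i+1}$ in place of $\t$ (legitimate by the conjugacy remark), together with the lemma identifying $\al_i$ as the root of $\phi_i$ closest to $\al_{i+1}$, shows that $\al_i,\al_{i+1}$ is a distinguished pair. Since the degrees satisfy $1=\deg_K(\al_0)<\cdots<\deg_K(\al_r)<\deg_K(\t)$ (these equal $\deg\phi_0<\cdots<\deg\phi_r<\deg F$ by definition of an Okutsu frame), $[\al_0,\dots,\al_r]$ is a complete distinguished chain for $\t$ by Definition \ref{Defcdchain}. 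For part (2), given a complete distinguished chain $[\al_0,\dots,\al_r]$ with minimal polynomials $\phi_0,\dots,\phi_r$, I apply Theorem \ref{min=min}(2) to each distinguished pair $\al_i,\al_{i+1}$ to conclude that $\phi_i,\phi_{i+1}$ is a distinguished pair of polynomials; since the degrees are strictly increasing and $\deg\phi_0=1$, the list $[\phi_0,\dots,\phi_r]$ satisfies the definition of an Okutsu frame of $F$ (the "$f$" in the statement being a typo for $F$). The only care needed is again that Theorem \ref{min=min}(2) is phrased with $\t$ the root of $F$, but here we use it internally with $\al_{i+1}$ in the role of that root, which is permissible since $\al_{i+1}$ is a root of its own minimal polynomial $\phi_{i+1}$ — exactly the hypothesis of that theorem.
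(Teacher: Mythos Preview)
Your overall strategy is correct and is exactly what the paper has in mind: the theorem is deduced from Theorem~\ref{min=min} applied at each step of the chain. However, you are overcomplicating the one genuine transition step, and your sketch of the ``key lemma'' (with its averaging argument and the literal ``$?$'') is muddled and unnecessary.

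The clean observation is the one recorded in the paper as equation~(\ref{talpha}): once $\gamma,\t$ is a distinguished pair, then for every $\beta\in\kb$ with $\deg_K(\beta)<\deg_K(\gamma)$ one has $v(\t-\beta)<v(\t-\gamma)$, and hence by the ultrametric inequality
\[
v(\gamma-\beta)=\mn\{v(\gamma-\t),\,v(\t-\beta)\}=v(\t-\beta).
\]
Thus ``closest to $\t$'' and ``closest to $\gamma$'' coincide on all elements of degree $<\deg_K(\gamma)$. Applied in your setting: after Theorem~\ref{min=min}(1) gives that $\al_r,\t$ is a distinguished pair, the identity above shows that $\al_{r-1}$, chosen to maximize $v(\t-\al')$ over $\al'\in\op{Z}(\phi_{r-1})$, automatically maximizes $v(\al_r-\al')$ as well. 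Now Theorem~\ref{min=min}(1), applied to the distinguished pair of polynomials $\phi_{r-1},\phi_r$ with $\al_r$ in the role of the root of the larger polynomial, yields that $\al_{r-1},\al_r$ is a distinguished pair; iterate down to $i=0$. No Galois bookkeeping or averaging is required for this step. Part~(2) is, as you say, a straightforward application of Theorem~\ref{min=min}(2) at each level, with $\al_{i+1}$ playing the role of the distinguished root of $\phi_{i+1}$.
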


The next result follows immediately from Theorems \ref{Ok=D1} and \ref{CDC=OS}.

\begin{theorem}[Aghigh-Khanduja \cite{MainInvSudesh,CHF}]\label{defless=OS}
An algebraic element $\t\in\kb$ admits a complete distinguished chain over $K$ if and only if it is defectless over $K$.  
\end{theorem}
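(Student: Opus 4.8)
The plan is to deduce the statement from the chain of equivalences already established, using Theorems \ref{Ok=D1} and \ref{CDC=OS} as the two main pillars. First I would fix $\t\in\kb\setminus K$ (the case $\t\in K$ being trivial, since then $\deg_K(\t)=1$ and $\t$ is tautologically defectless, with the empty chain) and let $F\in\kx$ be its minimal polynomial over $K$. The key observation is that ``$\t$ defectless over $K$'' means precisely ``$F$ defectless'', i.e. $\deg(F)=e(K(\t)/K)f(K(\t)/K)$; this is just the definition, so no work is needed to translate between the two.

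For the forward implication, suppose $\t$ admits a complete distinguished chain $[\al_0,\dots,\al_r]$ over $K$. Applying Theorem \ref{CDC=OS}(2), the list $[\phi_0,\dots,\phi_r]$ of minimal polynomials of the $\al_i$ is an Okutsu frame of $F$. Hence $F$ is an Okutsu polynomial, and by the equivalence (2)$\Leftrightarrow$(3) of Theorem \ref{Ok=D1}, $F$ is defectless; that is, $\t$ is defectless over $K$.

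For the converse, suppose $\t$ is defectless over $K$, so $F$ is a defectless polynomial. By Theorem \ref{Ok=D1}, (3)$\Rightarrow$(2), $F$ is an Okutsu polynomial, hence by definition it admits an Okutsu frame $[\phi_0,\dots,\phi_r]$. Now invoke Theorem \ref{CDC=OS}(1): choosing for each $i$ a root $\al_i\in\op{Z}(\phi_i)$ with $v(\t-\al_i)$ maximal among the roots of $\phi_i$, we obtain that $[\al_0,\dots,\al_r]$ is a complete distinguished chain for $\t$. This proves the equivalence.

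The only point that requires a little care — and what I expect to be the ``main obstacle'', though a mild one — is making sure the degenerate cases are handled: when $\t$ itself generates a degree-one extension, or when the Okutsu depth $r=0$ (so the frame is just $[\phi_0]$ with $\deg\phi_0=1$ and the distinguished chain is $[\al_0]$ with $\al_0\in K$). In those situations one must check that the empty-or-trivial chain still satisfies Definition \ref{Defcdchain} and that Theorem \ref{CDC=OS} applies verbatim; this is immediate from the conventions $\phi_{r+1}=F$ and $\al_{r+1}=\t$ already in force. Beyond that bookkeeping, the proof is a direct concatenation of the two cited theorems, with no new estimates needed.
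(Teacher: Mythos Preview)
Your proposal is correct and follows exactly the approach indicated in the paper, which states that the result ``follows immediately from Theorems \ref{Ok=D1} and \ref{CDC=OS}''. Your handling of the trivial and depth-zero cases is appropriate bookkeeping that the paper leaves implicit.
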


\begin{definition}\label{DefOkseq}
Let $\al_0,\al_1,\dots,\al_r,\t=\al_{r+1}\in\kb$ be algebraic elements such that
$$1=\deg_K(\al_0)<\cdots<\deg_K(\al_r)<\deg_K(\t).$$
We say that $[\al_0,\al_1,\dots,\al_r]$ is a \emph{complete Okutsu sequence} for $\t$ if the following conditions hold for all $\be\in\kb$ and all $0\le i\le r$: 
\begin{enumerate}
\item[(1)] \ $\deg_K(\be)<\deg_K(\al_{i+1})\ \imp\ v(\t-\be)\le v(\t-\al_i)$.
\item[(2)] \ $\deg_K(\be)<\deg_K(\al_{i})\ \imp\ v(\t-\be)< v(\t-\al_i)$.
\end{enumerate}
 \end{definition}

Given a distinguished pair  $\al,\,\t$, we have 
\begin{equation}\label{talpha}
\be\in\kb,\ \deg_K(\be)<\deg_K(\al)\ \imp\ v(\t-\be)=v(\al-\be).
\end{equation}
Indeed, by the definition of distinguished pair, $v(\t-\be)<v(\t-\al)$. This remark will be useful to compare Okutsu sequences and complete distinguished chains.

%

\begin{lemma}\label{Ok=CDC2}
A sequence $[\al_0,\al_1,\dots,\al_r]$ of elements in $\kb$ is a complete distinguished chain for $\t=\al_{r+1}$ if and only if it is a complete Okutsu sequence for $\t$.
\end{lemma}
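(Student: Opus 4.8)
The plan is to prove both implications by induction on $r$, using the equivalence~\eqref{talpha} as the bridge between the two kinds of conditions. First I would unwind the definitions: a complete distinguished chain requires, for each $0\le i\le r$, that $v(\t_{i+1}-\al_i)=\delta(\al_{i+1})$ (with $\al_{r+1}=\t$) \emph{and} that any $\be$ with $\deg_K(\be)<\deg_K(\al_i)$ satisfies $v(\al_{i+1}-\be)<\delta(\al_{i+1})$; a complete Okutsu sequence requires conditions (1) and (2) of Definition~\ref{DefOkseq} relative to $\t$ directly, for all $i$ simultaneously. The degree hypothesis $1=\deg_K(\al_0)<\cdots<\deg_K(\al_r)<\deg_K(\t)$ is common to both, so the content is the comparison of the valuative conditions.

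For the direction ``distinguished chain $\Rightarrow$ Okutsu sequence'': suppose $[\al_0,\dots,\al_r]$ is a complete distinguished chain for $\t$. The key point is that being a distinguished pair $\al_i,\al_{i+1}$ tells us about the proximity of $\al_i$ to $\al_{i+1}$, and I need to transfer this to proximity to $\t=\al_{r+1}$. The mechanism is the ``going-up'' property of the chain: for $j>i$, one shows $v(\t-\al_i)=v(\al_{i+1}-\al_i)=\delta(\al_{i+1})$, which is a decreasing sequence in $i$ (strictly, by the minimal-degree clause); concretely, $v(\al_{j+1}-\al_i)$ is controlled by the smallest link via the ultrametric inequality and the fact that each $v(\al_{k+1}-\al_k)=\delta(\al_{k+1})$ strictly exceeds $v(\al_{k+1}-\be)$ for all lower-degree $\be$, in particular for $\be=\al_{k-1}$. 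This chain of equalities, combined with~\eqref{talpha} applied at each link, yields $v(\t-\be)=v(\al_{i+1}-\be)$ whenever $\deg_K(\be)<\deg_K(\al_{i+1})$, and then conditions (1) and (2) of Definition~\ref{DefOkseq} follow from the defining properties of the distinguished pair $\al_i,\al_{i+1}$ (condition (1) from $v(\al_{i+1}-\al_i)=\delta(\al_{i+1})=\max$, condition (2) from the minimal-degree clause).

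For the converse, ``Okutsu sequence $\Rightarrow$ distinguished chain'': given $[\al_0,\dots,\al_r]$ a complete Okutsu sequence for $\t$, I must show each $\al_i,\al_{i+1}$ is a distinguished pair, i.e.\ relative to $\al_{i+1}$ rather than to $\t$. For $i=r$ there is nothing to change since $\al_{r+1}=\t$. For $i<r$ the point is to descend: using condition (1) at index $i+1$ one gets $v(\t-\al_{i+1})\le v(\t-\al_{i+1})$ trivially but more usefully $v(\t-\al_i)\ge v(\t-\be)$ for $\deg_K(\be)<\deg_K(\al_{i+1})$, and the strict inequality in (2) at higher indices forces $v(\t-\al_{i+1})>v(\t-\al_i)$ whenever $i<r$ (applying (2) with $\be=\al_i$ at index $i+1$ is not quite it; rather one applies (2) at the larger indices and the chain of strict inequalities $v(\t-\al_0)<v(\t-\al_1)<\cdots<v(\t-\al_r)<v(\t-\t)=\infty$ is what one extracts). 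Then $v(\t-\al_i)<v(\t-\al_{i+1})$ gives, by the ultrametric triangle inequality, $v(\al_{i+1}-\al_i)=v(\t-\al_i)$, and similarly $v(\al_{i+1}-\be)=v(\t-\be)$ for $\deg_K(\be)<\deg_K(\al_{i+1})$; translating conditions (1), (2) through these equalities produces exactly the distinguished-pair conditions for $\al_i,\al_{i+1}$.

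\textbf{Main obstacle.} The routine calculations are the ultrametric manipulations; the real care is needed in the bookkeeping of which inequalities are strict. The delicate step is establishing the strict ascending chain $v(\t-\al_0)<v(\t-\al_1)<\cdots<v(\t-\al_r)$ from condition (2) of the Okutsu sequence — one must apply (2) at index $i+1$ with $\be=\al_i$ (legitimate since $\deg_K(\al_i)<\deg_K(\al_{i+1})$) to get $v(\t-\al_i)<v(\t-\al_{i+1})$ — and, in the other direction, extracting from the distinguished-pair minimal-degree clause that the $\delta(\al_{i+1})$ are strictly decreasing so that each ultrametric comparison is an honest equality rather than merely $\ge$. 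Once the strictness is pinned down, \eqref{talpha} does all the remaining work.
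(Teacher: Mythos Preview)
Your proposal is correct and follows essentially the same approach as the paper's proof: both directions hinge on the transfer identity $v(\t-\be)=v(\al_{i+1}-\be)$ for $\deg_K(\be)<\deg_K(\al_{i+1})$, obtained from~\eqref{talpha} (equivalently, the ultrametric inequality together with the strict chain $v(\t-\al_0)<\cdots<v(\t-\al_r)$). Two small remarks: the sequence $v(\t-\al_i)=\delta(\al_{i+1})$ is \emph{increasing}, not decreasing, in $i$; and the paper organizes the converse as a top-down peeling induction (show $\al_r,\t$ is distinguished, then observe $[\al_0,\dots,\al_{r-1}]$ is an Okutsu sequence for $\al_r$ and iterate) rather than handling all $i$ simultaneously as you do, but the content is identical.
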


\begin{proof}
Let $\be\in\kb$ with $\deg_K(\be)<\deg_K(\t)$. Suppose that $[\al_0,\al_1,\dots,\al_r]$ is a complete distinguished chain for $\t$. By definition, for all $0\le i\le r$, it holds:
\begin{enumerate}
\item[(i)\,] \ $\deg_K(\be)<\deg_K(\al_{i+1})\ \imp\ v(\al_{i+1}-\be)\le v(\al_{i+1}-\al_i)$.
\item[(ii)] \ $\deg_K(\be)<\deg_K(\al_{i})\ \imp\ v(\al_{i+1}-\be)< v(\al_{i+1}-\al_i)$.
\end{enumerate}

If $i=r$, then $\al_{i+1}=\t$. If $i<r$, we have that $v(\t-\be)=v(\al_{i+1}-\be)$. Thus, in both cases, the conditions of Definition \ref{DefOkseq} coincide with (i) and (ii). Consequently, $[\al_0,\al_1,\dots,\al_r]$ is an Okutsu sequence for $\t$.\e

Conversely, suppose that $[\al_0,\al_1,\dots,\al_r]$ is an Okutsu sequence for $\t$. The conditions of  Definition \ref{DefOkseq} for $i=r$ show that $\al_r,\,\t$ is a distinguished pair. By (\ref{talpha}), 
$$v(\t-\be)=v(\al_r-\be),\qquad v(\t-\al_j)=v(\al_r-\al_j),
$$
for all $0\le j<r$ and all $\be\in\kb$ with $\deg_K(\be)<\deg_K(\al_{j+1})$. Therefore, the sequence $[\al_0,\dots,\al_{r-1}]$ is a complete Okutsu sequence for $\alpha_r$. The previous argument shows that $\alpha_{r-1},\,\alpha_r$ is a distinguished pair. An iterate argument proves that $[\al_0,\al_1,\dots,\al_r]$ is a complete distinguished chain for $\t$. 
\end{proof}

If $[\al_0, \ldots, \al_r]$ is a complete Okutsu sequence  for $\t\in\kb$, then $[\al_0, \ldots, \al_i]$ is a complete Okutsu sequence  for $\al_{i+1}$, for all $1\le i<r$, because this property is obviously true for complete distinguished chains.

\section{Main invariant of quasi-tame algebraic elements}\label{secTame}

In this section, we compute several invariants attached to quasi-tame algebraic elements. To this purpose, Okutsu sequences are a more feasible tool than complete distinguished chains.

\begin{definition}\label{deftame}
Let $\t=\al_{r+1}\in\kb$ be defectless, and let  $[\al_0, \ldots, \al_r]$ be a complete Okutsu sequence for $\t$.
Denote $L=K(\t)$ and let $k_L$ be the residue class field of $(L,v)$. 

We say that $\t\in\kb$ is \emph{tame} if 
\begin{itemize}
\item $k_L/k$ is separable, and
\item the ramification index $e(L/K)$ is not divisible by $\chr(K)$.
\end{itemize}

We say that $\t$ is \emph{quasi-tame} if it is separable and $\al_r$ is tame.
\end{definition}

It is easy to check that a tame $\t$ is necessarily separable over $K$.

Let $K^s\subset \kb$ be the separable closure of $K$ in $\kb$. The subgroup 
$$
G\ram(K)=\left\{\sigma\in\gal(K^s/K)\mid v(\sigma(c)-c)>v(c),\,\forall\, c\in (K^s)^*\right\}
$$
is the \emph{ramification subgroup} of $G$. Its fixed field $K\ram=(K^s)^{G\ram}$ is named the \emph{ramification field} for the extension $K^s/K$. This field is the unique maximal tame extension of $K$ in $\kb$. More precisely, for any algebraic extension $L/K$, the subfield $L\cap K\ram$ is the unique maximal tame extension of $K$ in $L/K$.\e 

From now we consider $[\al_0, \ldots, \al_r]$ a complete Okutsu sequence for $\t=\al_{r+1} \in \kb$ and we shall usually denote
$$
\delta_0=v(\t-\al_0)<\dots<\delta_r=v(\t-\al_r)<\delta_{r+1}=v(\t-\al_{r+1})=\infty.
$$

By Lemma \ref{Ok=CDC2}, we have $\delta_i=v(\al_{i+1}-\al_i)=\delta(\al_{i+1})$, for all $0\le i\le r$.

The next result is inspired in the original ideas of Okutsu \cite{Ok,okutsu}.

\begin{proposition}\label{Hi} Let $[\al_0, \ldots, \al_r]$ be a complete Okutsu sequence for a separable $\t=\al_{r+1} \in K^s$. Consider a separable $\be \in K^s$ such that
$$\deg(\be)=m_{i}, \ \ \ v(\t-\be)> \delta_{i-1},$$
for some $1\le i\le r+1$.
Let $M/K$ be any finite Galois extension containing $K(\t,\be)$. Let $G=\gal(M/K)$ and consider the subgroups
$$
H_i=\{ \sigma \in G \mid v(\t- \sigma(\t))> \delta_{i-1})\} \supset \overline{H}_i=\{ \sigma \in G \mid v(\t- \sigma(\t))\geq \delta_{i}\}.
$$
Let $M^{H_i} \subset M^{\overline{H}_i}\subset M$ be the respective fixed fields. Finally, let $V$ be the maximal tame subextension of $K(\be)/K$. Then,
$$V\subset M^{H_i}\subset K(\t)\cap K(\be).$$
Moreover, if $v(\t-\be)=\delta_{i}$ then $V\subset M^{H_i}\subset M^{\overline{H}_i}\subset K(\t)\cap K(\be)$.
\end{proposition}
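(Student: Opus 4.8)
The plan is to first establish the three inclusions $V\subset M^{H_i}$, $M^{H_i}\subset K(\t)$, and $M^{H_i}\subset K(\be)$ (the last two being what ``$\subset K(\t)\cap K(\be)$'' means), and then to refine the middle chain under the extra hypothesis $v(\t-\be)=\delta_i$. Throughout I would work inside the Galois group $G=\gal(M/K)$, using that $M^H$ is the fixed field of $H$ and that, by Galois theory, proving $M^{H_i}\subset K(\t)$ amounts to showing $\gal(M/K(\t))\subset H_i$, i.e.\ every $\sigma$ fixing $\t$ satisfies $v(\t-\sigma(\t))>\delta_{i-1}$ — which is trivial since then $\t=\sigma(\t)$. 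Similarly $M^{H_i}\subset K(\be)$ will follow once I show $\gal(M/K(\be))\subset H_i$, which is the first genuine point: if $\sigma$ fixes $\be$ then $v(\t-\sigma(\t))\ge\min\{v(\t-\be),v(\be-\sigma(\be)),v(\sigma(\be)-\sigma(\t))\}=\min\{v(\t-\be),v(\t-\be)\}>\delta_{i-1}$, using $v(\t-\be)>\delta_{i-1}$ and $v(\sigma(\be)-\sigma(\t))=v(\be-\t)$. So $\sigma\in H_i$, giving $M^{H_i}\subset K(\be)$; likewise, when $v(\t-\be)=\delta_i$ the same computation with $\overline H_i$ gives $\gal(M/K(\be))\subset\overline H_i$, hence $M^{\overline H_i}\subset K(\be)$, which is the refinement claimed at the end.

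Next I would treat $V\subset M^{H_i}$, equivalently $H_i\subset\gal(M/V)$, i.e.\ every $\sigma\in H_i$ fixes the maximal tame subextension $V$ of $K(\be)/K$. This is where I expect the real work to be, and where the hypothesis ``$\deg_K(\be)=m_i$'' together with the Okutsu-sequence structure must enter. The idea is that $V=K(\be)\cap K\ram$, and one has to show $H_i$ acts trivially on $K\ram\cap K(\be)$. I would argue as follows: since $[\al_0,\dots,\al_{i-1}]$ is a complete Okutsu sequence for $\al_i$ and $\deg_K(\be)=m_i=\deg_K(\al_i)$ with $v(\t-\be)>\delta_{i-1}$, the element $\be$ is ``close enough'' to $\al_i$ that $K(\be)$ and $K(\al_i)$ have the same maximal tame subextension; more precisely, $v(\al_i-\be)\ge\min\{v(\t-\al_i),v(\t-\be)\}>\delta_{i-1}=\delta(\al_i)$, and Krasner-type arguments (or directly: $\delta(\al_i)$ governs how much $\al_i$ can be perturbed without changing the relevant invariants) show $V$ is also the maximal tame subextension of $K(\al_i)/K$, indeed $V\subseteq K(\al_i)$. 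Then for $\sigma\in H_i$, I need $\sigma$ to fix $V$; since $V/K$ is tame and $V\subset K\ram$, an element $\sigma\in\gal(M/K)$ fixes $V$ as soon as its restriction to $K\ram$ is trivial, and $\sigma\in H_i$ forces exactly this via the defining inequality $v(\sigma(c)-c)>v(c)$ of $G\ram(K)$ applied to a generator of $V$ — here one uses $v(\t-\sigma(\t))>\delta_{i-1}$ to propagate closeness from $\t$ down the Okutsu chain to $\al_{i-1}$ and then, through the distinguished pair $\al_{i-1},\al_i$, to the tame part $V$ of layer $i$.

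The main obstacle, then, is the inclusion $V\subset M^{H_i}$: making precise the claim that an automorphism moving $\t$ by less than $\delta_{i-1}$ must act trivially on the tame subextension attached to the $i$-th layer. I would handle this by reducing to $\al_i$ (via $v(\al_i-\be)>\delta(\al_i)$, so $K(\be)$ and $K(\al_i)$ share the maximal tame subextension $V$, using that tame subextensions are insensitive to Krasner-small perturbations), and then invoking that $V\subset K\ram$ together with the description of $G\ram(K)$ as the automorphisms $\sigma$ with $v(\sigma(c)-c)>v(c)$ for all $c$: one checks that $\sigma\in H_i$ implies $\sigma|_{K\ram}$ lies in $\gal(K\ram/K)$ composed trivially on $V$, because the ``distance'' $\delta_{i-1}$ is, by the complete-Okutsu-sequence normalization, at least the level at which the layer-$i$ tame data stabilizes. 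The remaining inclusions are, as indicated above, short triangle-inequality computations in $(\,\kb,v)$ combined with the Galois correspondence, and I would present them first as warm-up before the tame-part argument.
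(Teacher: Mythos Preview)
Your treatment of the inclusions $M^{H_i}\subset K(\t)\cap K(\be)$ and of the $\overline H_i$ refinement is correct and matches the paper's argument exactly: both use the Galois correspondence together with the triangle inequality $v(\t-\sigma(\t))\ge\mn\{v(\t-\be),v(\sigma(\be)-\sigma(\t))\}$.

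The gap is in your argument for $V\subset M^{H_i}$. Two points. First, the detour through $\al_i$ via a Krasner-type claim (``$K(\be)$ and $K(\al_i)$ have the same maximal tame subextension'') is neither needed nor actually established; the paper works directly with $\be$. Second, and more seriously, your sentence ``an element $\sigma\in\gal(M/K)$ fixes $V$ as soon as its restriction to $K\ram$ is trivial'' sets up the wrong target: it is generally false that $H_i\subset G\ram(K)$, so you cannot hope to show $\sigma|_{K\ram}=\op{id}$. What is needed is only $v(\sigma(c)-c)>v(c)$ for all $c\in K(\be)^*$ (hence for all $c\in V^*$), from which tameness of $V/K$ gives $\sigma|_V=\op{id}$. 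Your phrases ``propagate closeness down the Okutsu chain'' and ``the level at which the layer-$i$ tame data stabilizes'' do not amount to a proof of this inequality.

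The paper's missing ingredient is a concrete computation you never reach. Write $c=g(\be)$ with $\deg g<m_i$ and factor $g(x)=a\prod_\xi(x-\xi)$ over $\kb$. The Okutsu-sequence condition (Definition~\ref{DefOkseq}(1)) gives $v(\t-\xi)\le\delta_{i-1}$ for every root $\xi$, hence $v(\be-\xi)=\mn\{v(\be-\t),v(\t-\xi)\}=v(\t-\xi)\le\delta_{i-1}$. On the other hand, for $\sigma\in H_i$ the same triangle estimate you used earlier yields $v(\sigma(\be)-\be)>\delta_{i-1}$. Therefore each factor satisfies
\[
\dfrac{\sigma(\be)-\xi}{\be-\xi}=1+\dfrac{\sigma(\be)-\be}{\be-\xi},\qquad v\!\left(\dfrac{\sigma(\be)-\be}{\be-\xi}\right)>0,
\]
so $v(\sigma(c)/c-1)>0$, i.e.\ $v(\sigma(c)-c)>v(c)$. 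This is the step where the hypothesis $\deg_K(\be)=m_i$ is actually used (to force $\deg g<m_i$ and invoke the Okutsu bound on all roots of $g$), and it is absent from your plan.
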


\begin{proof}
First, let us show that $M^{H_i}\subset K(\t)\cap K(\be)$. For this, it suffices to show that all  $\sigma \in G$ fixing $\t$ or $\be$ belong to $H_i$. 

If $\sigma(\t)=\t$, then $\sigma \in H_i$ because $v(\t-\sigma(\t))=\infty> \delta_{i-1}$. If $\sigma(\be)=\be$, then $v(\sigma(\t)-\be)=v(\sigma(\t)-\sigma(\be))=v(\t-\be) > \delta_{i-1}.$ Thus,
$$v(\t-\sigma(\t))\geq \Min \{v(\t-\be),\,v(\be-\sigma(\t))\}> \delta_{i-1}.$$

In the case $v(\t-\be)=\delta_{i}$, the same argument shows that $M^{\overline{H}_i}\subset K(\t)\cap K(\be)$.

Finally let us prove that $V\subset M^{H_i}$. Since $V$ is the maximal tame extension of $K(\be)$, we have that $V=K\ram\cap K(\be)$, so we must prove that 
$$H_i \subset \{\sigma \in G \mid v(\sigma(c)-c)>v(c),\quad \forall c \in K(\be)^*\}.$$

Take  $\sigma \in H_i$. Any $c \in K(\be)^*$ can be written as $c=g(\be)$ for some $g \in K[x]$ with  $\deg(g)<m_{i}$. By the minimality of $m_{i}$, for any root $\xi$ of $g$ we have $v(\t-\xi)\leq \delta_{i-1}$. Hence,
$
v(\be-\xi)=\mn\left\{v(\be-\t),\,v(\t-\xi)\right\}=v(\t-\xi)\le \delta_{i-1}
$.

Write $g(x)= a\prod_{\xi\in\op{Z}(g)} (x-\xi)$. Then,
$$\dfrac{g(\sigma(\be))}{g(\be)}= \prod_\xi \dfrac{\sigma(\be)-\xi}{\be-\xi} = \prod_\xi \left(1+ \dfrac{\sigma(\be)-\be}{\be-\xi} \right).$$

Since $\sigma \in H_i$, we have
$$
v\left(\sigma(\be)-\be\right)\geq \Min \left\{v\left(\sigma(\be)-\sigma(\t)\right),\,v\left(\sigma(\t)-\t\right),\, v\left(\t-\be\right)\right\} > \delta_{i-1}.
$$ Since $v(\be-\xi)=v(\t-\xi)\le \delta_{i-1}$, this implies $v((\sigma(\be)-\be)/(\be-\xi)) >0$. Hence, 
$$v \left(\dfrac{\sigma(c)}{c}-1\right)= v \left(\dfrac{g(\sigma(\be))}{g(\be)} -1 \right) >0.$$ 
This proves that $V\subset M^{H_i}$.
\end{proof}

\begin{lemma}\label{diameter+altame}
Let $[\al_0, \ldots, \al_r]$ be a complete Okutsu sequence for $\t \in \kb$. 
\begin{enumerate}
\item[(1)] \ If $F\in\kx$ is the minimal polynomial of $\t$, then $v(\t-\t')\ge v(\t-\al_0)$, for all $\t'\in\op{Z}(F)$.
\item[(2)] \ If $\t$ is quasi-tame over $K$, then   $\al_1, \ldots, \al_r$ are tame over $K$.
\end{enumerate}
\end{lemma}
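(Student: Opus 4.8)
For part (1), I would argue directly. Fix $\t' \in \op{Z}(F)$ with $\t' \ne \t$, and let $\sigma$ be an element of a suitable Galois group $G = \gal(M/K)$ (with $M/K$ finite Galois containing $\t$, or rather containing all conjugates; if $\t$ is inseparable one passes first to the separable setting via Lemma \ref{InsepSep}, or invokes the inseparable-degree factor separately) with $\sigma(\t) = \t'$. Since $\al_0 \in K$ (because $\deg_K(\al_0) = 1$), we have $\sigma(\al_0) = \al_0$, so
$$
v(\t - \t') = v(\t - \sigma(\t)) \ge \Min\{v(\t - \al_0),\, v(\al_0 - \sigma(\t))\} = \Min\{v(\t - \al_0),\, v(\sigma(\al_0 - \t))\} = v(\t - \al_0),
$$
where the last equality uses that $v$ is $G$-invariant so $v(\sigma(\al_0 - \t)) = v(\al_0 - \t) = v(\t - \al_0)$. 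So $\delta_0 = v(\t-\al_0)$ is a lower bound for every $v(\t-\t')$. (If $\t$ is inseparable, write $F = g(x^{p^s})$ with $g$ separable and reduce to the separable case, noting all roots of $F$ lying over a fixed root of $g$ have pairwise valuation difference that is easily controlled; alternatively one just needs the separable case since the proposition this feeds into assumes separability — but as stated for general $\kb$ a brief inseparable reduction is needed.)

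For part (2), the point is to show each $\al_j$ with $1 \le j \le r$ is tame over $K$, given that $\t$ is quasi-tame, i.e.\ $\t$ is separable and $\al_r$ is tame. First, separability of $\t$ forces separability of all the $\al_j$: indeed by Lemma \ref{Ok=CDC2} the sequence $[\al_0,\dots,\al_{j-1}]$ is a complete distinguished chain for $\al_j$, and if $\al_j$ were inseparable we could not have $\t$ separable with $K(\al_j) \subset K(\t)$ — more carefully, $\al_j \in K(\t)$ would follow from a trace/norm or from the fact that a complete distinguished chain for $\t$ refines one for $\al_j$; the cleanest route is that $\t$ quasi-tame separable means $K(\t)/K$ is, by definition of the chain, built so that $K(\al_r) \subset K(\t)$, hence $\al_r$ tame separable and $K(\al_j) \subset K(\al_r)$, so $\al_j$ is separable. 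Then tameness of $\al_j$ follows from tameness of $\al_r$ together with $K(\al_j) \subset K(\al_r)$: a subextension of a tame extension is tame (the residue extension $k_{K(\al_j)}/k$ is separable as a subextension of the separable $k_{K(\al_r)}/k$, and $e(K(\al_j)/K) \mid e(K(\al_r)/K)$ is prime to $\chr(K)$).

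The one gap to fill carefully is the inclusion $K(\al_j) \subset K(\al_r) \subset K(\t)$. That $K(\al_r) \subset K(\t)$ (equivalently $M^{H_{r+1}} = K(\t) \cap K(\al_r)$ contains $\al_r$, i.e.\ $\al_r \in K(\t)$) should come from applying Proposition \ref{Hi} with $i = r+1$, $\be = \al_r$ (which has $\deg(\al_r) = m_r = m_{(r+1)-1}$... here I need the right index: $\be$ has degree $m_i$ and $v(\t-\be) > \delta_{i-1}$; taking $\be = \al_r$, $v(\t - \al_r) = \delta_r > \delta_{r-1}$, so $i-1 = r-1$, $i = r$, and $\deg(\al_r) = m_r$ requires $m_i = m_r$, consistent). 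Proposition \ref{Hi} then gives $V \subset M^{H_r} \subset K(\t) \cap K(\al_r)$ where $V$ is the maximal tame subextension of $K(\al_r)/K$; since $\al_r$ is tame, $V = K(\al_r)$, whence $K(\al_r) \subset K(\t)$. Applying the same with $\t$ replaced by $\al_{j+1}$ and the (truncated) Okutsu sequence $[\al_0,\dots,\al_j]$ for $\al_{j+1}$ — using that each $\al_j$, once known tame, plays the role of a tame $\be$ — propagates the inclusions $K(\al_j) \subset K(\al_{j+1})$ downward. The main obstacle is thus bookkeeping the index shifts in Proposition \ref{Hi} and checking that $\al_r$ tame $\Rightarrow$ each $\al_j$ ($j < r$) is tame, which reduces to the elementary fact that tameness passes to subextensions; I would state and use that fact explicitly.
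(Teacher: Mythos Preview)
Your argument for part~(1) is correct and essentially the same as the paper's, though you overcomplicate it with Galois theory and separability concerns. Since $\al_0\in K$ and $(K,v)$ is henselian, the unique extension of $v$ to $K(\t')$ for any root $\t'$ of $F$ forces $v(\t'-\al_0)=v(\t-\al_0)$; the triangle inequality then gives $v(\t-\t')\ge\mn\{v(\t-\al_0),\,v(\t'-\al_0)\}=v(\t-\al_0)$. No Galois group or inseparable reduction is needed.

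Your approach to part~(2), however, is circular. You want to deduce that each $\al_j$ is tame from the inclusion $K(\al_j)\subset K(\al_r)$ together with ``tameness passes to subextensions''. But to obtain $K(\al_j)\subset K(\al_r)$ from Proposition~\ref{Hi} you need the maximal tame subextension $V_j$ of $K(\al_j)/K$ to equal $K(\al_j)$ itself---i.e.\ you need $\al_j$ tame, which is precisely the conclusion. Propagating downward from $K(\al_r)\subset K(\t)$ (which \emph{does} follow from Proposition~\ref{Hi} since $\al_r$ is tame by hypothesis) to $K(\al_{r-1})\subset K(\al_r)$ runs into the same obstruction: Proposition~\ref{Hi} applied to the Okutsu sequence for $\al_r$ with $\be=\al_{r-1}$ only yields $V_{r-1}\subset K(\al_r)\cap K(\al_{r-1})$, and you have no independent way to identify $V_{r-1}$ with $K(\al_{r-1})$.

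The paper sidesteps this circularity entirely by appealing to the MacLane-chain data from Section~\ref{secIndVals} rather than to field inclusions. The tower of residue fields $k=k_{\phi_0}\subset k_{\phi_1}\subset\cdots\subset k_{\phi_r}$ in~(\ref{towerki}) shows that separability of $k_{\phi_r}/k$ forces separability of every $k_{\phi_i}/k$, and the divisibility $e(\phi_0)\mid\cdots\mid e(\phi_r)$ from~(\ref{ephi}) shows that if $\chr(K)\nmid e(\phi_r)$ then $\chr(K)\nmid e(\phi_i)$ for all $i$. No inclusion $K(\al_j)\subset K(\al_r)$ is ever used here; indeed, those inclusions are established only \emph{afterwards}, in the proof of Theorem~\ref{Valuesti}, using the tameness of the $\al_j$ just proved.
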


\begin{proof}
Since $\al_0\in K$, we have $v(\t-\al_0)=v(\t'-\al_0)$ for all $\t'\in Z(F)$, by the henselian property. This proves (1):
$$v(\t-\t')\ge \Min \left\{v(\t-\al_0),\, v(\t'-\al_0)\right\}=v(\t-\al_0).$$

By Lemma \ref{Ok=CDC2}, $[\al_0, \ldots, \al_{i-1}]$ is a complete Okutsu sequence for $\al_i$. Hence, all $\al_i$ are defectless by Theorem \ref{defless=OS}.

As indicated in (\ref{towerki}), we have a tower of finite extensions of $k$:
$$
k=k_{\phi_0}\subset k_{\phi_1}\subset\cdots\subset k_{\phi_r}.
$$
Thus, the assumption that $k_{\phi_r}/k$ is separable implies that all $k_{\phi_i}/k$ are separable too.

Finally, (\ref{ephi}) shows that
$$
1=e(\phi_0)\mid\cdots \mid e(\phi_i)\mid\cdots \mid e(\phi_r).
$$
Thus, if $e(\phi_r)$ is not divisible by the characteristic of $K$, all ramification indices $e(\phi_i)$ have the same property. This proves that $\al_1,\dots,\al_r$ are tame. 
\end{proof}

\begin{theorem}\label{Valuesti}
Let $\t\in\kb$ be quasi-tame of degree $n=\deg_K(\t)>1$.
Let $[\al_0, \ldots, \al_r]$ be a complete Okutsu sequence for $\t=\al_{r+1}$,  and denote
$$
m_i=\deg_K(\alpha_i),\qquad \delta_i=v\left(\t-\alpha_i\right),\qquad 0\le i\le r+1.
$$

Then, it holds:
\begin{enumerate}
\item[(1)] \ $K=K(\al_0) \subset K(\al_1)\subset\cdots \subset K(\al_r) \subset K(\t)$.
\item[(2)] \ The following multisets of cardinality $n-1$ coincide:
$$
\Omega(\t)=\left\{v\left(\t-\t'\right) \mid \t'\in \op{Z}(f), \ \t'\ne\t\right\}=\left\{\delta_0^{t_0},\dots,\delta_r^{t_r}\right\},
$$
where \ $t_i=(n/m_i)-(n/m_{i+1})$ \ for all $0\le i\le r$.
\item[(3)] \ $\delta(\t)=\om(\t)=\delta_r$.
\end{enumerate}
\end{theorem}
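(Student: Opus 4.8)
\emph{Proof strategy.} The plan is to read all three assertions essentially off Proposition \ref{Hi} and Lemma \ref{diameter+altame}, applied one layer at a time along the complete Okutsu sequence, together with a Galois-theoretic count; and $\delta(\t)=\delta_r$ comes for free, since condition (1) of Definition \ref{DefOkseq} with $i=r$ gives $v(\t-\be)\le v(\t-\al_r)=\delta_r$ for every $\be\in\kb$ with $\deg_K(\be)<n$, while $\al_r$ realizes $\delta_r$.

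For part (1) I would prove $K(\al_i)\subseteq K(\al_{i+1})$ for $0\le i\le r$ (reading $\al_{r+1}=\t$), the case $i=0$ being trivial. By Lemma \ref{Ok=CDC2} and the remark after it, $[\al_0,\dots,\al_i]$ is a complete Okutsu sequence for $\al_{i+1}$ with $v(\al_{i+1}-\al_j)=\delta_j$ for $j\le i$, and $\al_{i+1}$ is separable (in fact tame when $i+1\le r$, by Lemma \ref{diameter+altame}(2)). Then I would invoke Proposition \ref{Hi} for this sequence with $\be=\al_i$ and index $i$: the hypotheses $\deg(\al_i)=m_i$ and $v(\al_{i+1}-\al_i)=\delta_i>\delta_{i-1}$ hold, and since $v(\al_{i+1}-\al_i)=\delta_i$ the ``moreover'' clause gives $V\subseteq M^{H_i}\subseteq M^{\overline{H}_i}\subseteq K(\al_{i+1})\cap K(\al_i)$ with $V$ the maximal tame subextension of $K(\al_i)/K$; as $\al_i$ is tame (Lemma \ref{diameter+altame}(2)), $V=K(\al_i)$ and we are done.

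For parts (2) and (3) I would fix a finite Galois $M/K$ containing $\t$ and all $\al_i$, write $G=\gal(M/K)$, $H_\t=\gal(M/K(\t))$, and identify $\op{Z}(F)$ with the $G$-orbit of $\t$ (each root $\neq\t$ occurring $|H_\t|=[M:K(\t)]$ times). For $1\le i\le r$ set $H_i=\{\sg\in G\mid v(\t-\sg(\t))>\delta_{i-1}\}\supseteq\overline{H}_i=\{\sg\in G\mid v(\t-\sg(\t))\ge\delta_i\}$ (subgroups, by $G$-invariance of $v$). Rerunning the computation of part (1) with ``$\al_{i+1}$''$=\t$, and using part (1) to get $K(\t)\cap K(\al_i)=K(\al_i)$, Proposition \ref{Hi} forces $M^{H_i}=M^{\overline{H}_i}=K(\al_i)$, hence $H_i=\overline{H}_i=\gal(M/K(\al_i))$ and $|H_i|/|H_\t|=[K(\t):K(\al_i)]=n/m_i$. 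Since $H_i=\overline{H}_i$, the value $v(\t-\sg(\t))$ of any $\sg\notin H_\t$ lies in no open interval $(\delta_{i-1},\delta_i)$; with Lemma \ref{diameter+altame}(1) this confines it to $\{\delta_0,\dots,\delta_{r-1}\}\cup[\delta_r,\infty)$, and it is finite. Provided $v(\t-\sg(\t))>\delta_r$ forces $\sg(\t)=\t$ — equivalently $H_{r+1}:=\{\sg\in G\mid v(\t-\sg(\t))>\delta_r\}=H_\t$ — then, putting $H_0=G$, the $\sg$ with $v(\t-\sg(\t))=\delta_i$ number $|H_i|-|H_{i+1}|$, so $\delta_i$ occurs in $\Omega(\t)$ with multiplicity $n/m_i-n/m_{i+1}=t_i$ (using $m_0=1$, $m_{r+1}=n$); and since $t_r=n/m_r-1>0$, $\delta_r$ occurs, whence $\om(\t)=\mx(\Omega(\t))=\delta_r=\delta(\t)$.

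The main obstacle is exactly $H_{r+1}=H_\t$. I would reduce it thus: if $\sg\in H_{r+1}$ then $v(\t-\sg(\t))>\delta_r>\delta_{r-1}$, so $\sg\in H_r=\gal(M/K(\al_r))$ fixes $K(\al_r)$, and $\t,\sg(\t)$ are conjugate over $K(\al_r)$. Over the base $K(\al_r)$, the element $\t$ has Okutsu depth $0$, with $[x-\al_r]$ as complete Okutsu sequence (because $v(\t-\al_r)=\delta_r$ and any $\be$ of $K$-degree $<n$ has $v(\t-\be)\le\delta_r$); so the place where quasi-tameness is genuinely needed — beyond Lemma \ref{diameter+altame}(2), which only tames the lower layers $\al_1,\dots,\al_r$ — is to conclude that this top layer $K(\t)/K(\al_r)$ is a \emph{tame} extension. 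Granting that, Proposition \ref{Hi} applied over $K(\al_r)$ to the depth-$0$ Okutsu sequence of $\t$, with $\be=\t$ and index $1$, has maximal tame subextension equal to all of $K(\t)$, forcing $K(\t)\subseteq M^{H_{r+1}}$, i.e.\ $H_{r+1}\subseteq H_\t$; the reverse inclusion is obvious. (If $\t$ is itself tame over $K$ this is immediate by applying Proposition \ref{Hi} with $\be=\t$ directly over $K$.) The delicate point is therefore isolated: establishing tameness of the single extension $K(\t)/K(\al_r)$ from the quasi-tameness of $\t$.
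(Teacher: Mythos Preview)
Your approach is essentially the paper's: apply Proposition~\ref{Hi} with $\be=\al_i$ for each $i$, use tameness of $\al_i$ (Lemma~\ref{diameter+altame}(2)) to force $M^{H_i}=M^{\overline{H}_i}=K(\al_i)$, and read off $\Omega(\t)$ from the chain $G=H_0\supset H_1\supset\cdots\supset H_r\supset H_{r+1}$. The only cosmetic difference in part~(1) is that the paper works with the fixed Okutsu sequence for $\t$ throughout (so $K(\al_1)\subset\cdots\subset K(\al_r)\subset K(\t)$ falls out of $H_1\supset\cdots\supset H_r\supset\gal(M/K(\t))$), whereas you truncate and argue step by step; both are fine. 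For the top of the chain the paper simply \emph{declares} $H_{r+1}=\gal(M/K(\t))$ and then, for $\sigma\in H_r\setminus H_{r+1}$, asserts $v(\t-\sigma(\t))=\delta_r$ ``by the definition of the subgroups $\overline{H}_i$ and $H_{i+1}$''. But that sentence needs the \emph{valuation} description $H_{r+1}=\{\sigma:v(\t-\sigma(\t))>\delta_r\}$, which is precisely the equality you flag as the ``main obstacle''. So the point you isolate is exactly the step the paper does not argue.

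Your worry is well founded, and the gap cannot be closed from the quasi-tame hypothesis alone: quasi-tameness says nothing about the top layer $K(\t)/K(\al_r)$. For Okutsu depth $r=0$ the condition is vacuous, since $\al_0\in K$ is trivially tame. Take $K=\Q_p$ and $\t$ a root of $x^p-p$. Then $[\al_0]=[0]$ is a complete Okutsu sequence with $\delta_0=v(\t)=1/p$, and $\t$ is quasi-tame; but the conjugates of $\t$ are $\zeta\t$ with $\zeta^p=1$, so
\[
\om(\t)=v\bigl(\t(1-\zeta)\bigr)=\tfrac{1}{p}+\tfrac{1}{p-1}>\tfrac{1}{p}=\delta_0=\delta(\t),
\]
and part~(3) fails. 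Thus your reduction to ``tameness of $K(\t)/K(\al_r)$'' is the right diagnosis, but that tameness is a genuine extra hypothesis, not a consequence of quasi-tameness. Under the stronger assumption that $\t$ itself is tame, your argument (or equivalently Proposition~\ref{Hi} with $\be=\t$ and $i=r+1$, giving $V=K(\t)=M^{H_{r+1}}$) finishes the proof cleanly.
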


\begin{proof} Let $M/K$ be a finite Galois extension of $K$ containing $K(\t,\al_1,\dots,\al_r)$, and denote $G=\gal(M/K)$.

Fix an index $0\le i\le r$. Since $\deg_K(\al_i)=m_i$ and $v(\t-\al_i)=\delta_i$, Proposition \ref{Hi} applied to $\be=\al_i$ shows that $V_i\subset M^{H_{i}}\subset M^{\overline{H}_{i}}\subset K(\al_i)\cap K(\t),
$ where $V_i$ is the maximal tame subextension of $K(\al_i)$.

By Lemma \ref{diameter+altame}, $K(\al_i)/K$ is tame, so that $V_i=K(\al_i)$. Therefore,
\begin{equation}\label{HH} 
V_i=M^{H_{i}}= M^{\overline{H}_{i}}=K(\al_i)\subset K(\t).
\end{equation}

Now, denote $H_0:=G$ and consider the chain of subgroups
$$
G=H_0\supset H_1\supset\cdots\supset H_r\supset H_{r+1}=\gal(M/K(\t)).
$$
The corresponding chain of fixed fields is that given in item (1).\e

Moreover,  (\ref{HH}) implies 
$$
\left(H_i:H_{i+1}\right)=[K(\al_{i+1})\colon K(\al_i)]=m_{i+1}/m_i>1, \qquad 0\le i\le r,
$$
so that all inclusions in the chain of subgroups are strict. Hence, for any 
$\sigma \in G\setminus H_{r+1}$, there exists a unique $0\le i\le r$ such that 
$\sigma\in \overline{H}_{i}=H_i$, and  $\sigma\not\in H_{i+1}.$
If $i>0$, then $v\left(\t-\sigma(\t)\right)=\delta_i$, by the definition of the subgroups $\overline{H}_{i}$ and $H_{i+1}$.

If $i=0$, then $\sigma\not\in H_1$ implies $v\left(\t-\sigma(\t)\right)\le\delta_0$. By Lemma \ref{diameter+altame}, we deduce that $v\left(\t-\sigma(\t)\right)=\delta_0$ in this case too.

Therefore, the underlying set of the multiset $\Omega(\t)$ is the set $\left\{\delta_0,\dots,\delta_r\right\}$.\e

Now, it remains to find a concrete formula for the multiplicity $t_i$ of each value $\delta_i$.

Let $F\in\kx$ be the minimal polynomial of $\t$ over $K$. The natural action of $G$ on $\op{Z}(F)$ induces a bijection:
$$
G/\gal(M/K(\t))\lra \op{Z}(F),\qquad \sigma\longmapsto \sigma(\t).
$$
For any $0\le i\le r$, the restriction of this bijection to the subgroup $H_i/\gal(M/K(\t))$ determines a bijection:
$$
H_i/\gal(M/K(\t))\lra Z_i(F):=\left\{\t'\in \op{Z}(F)\mid v\left(\t-\t'\right)\ge\delta_i\right\}.
$$
Hence, the multiplicity $t_i$ is equal to:
\begin{align*}
t_i=\#Z_i(f)-\#Z_{i+1}(f)&\;=\#H_i/\gal(M/K(\t))-\#H_{i+1}/\gal(M/K(\t))\\&\;=[K(\t)\colon K(\al_i)]-[K(\t)\colon K(\al_{i+1})]=\dfrac{n}{m_i}-\dfrac{n}{m_{i+1}}.
\end{align*}
This ends the proof of items (2) and (3).
\end{proof}

We end this section with an explicit formula for the main invariant $\delta(\t)=\om(\t)$ in terms of the discrete invariants, described in section \ref{secIndVals}, attached to the inductive valuation corresponding to the minimal polynomial of $\t$ over $K$.

\begin{proposition}\label{lambdas}
With the above notation, if $\al_r$ is tame over $K$, then
\begin{equation}\label{ci}
\delta_i=\lambda_0+\cdots+\lambda_i,\qquad 0\le i \le r.
\end{equation}
\end{proposition}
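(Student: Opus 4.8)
The plan is to reduce everything to the already-established dictionary between complete Okutsu sequences and optimal MacLane chains, and then read off the $\delta_i$ from the slopes. First I would invoke Theorem~\ref{OkML}: since $\t$ is defectless (being quasi-tame), and $[\al_0,\dots,\al_r]$ is a complete Okutsu sequence for $\t$, the associated polynomials $[\phi_0,\dots,\phi_r]$ (minimal polynomials of the $\al_i$) form an Okutsu frame of $F$, and with $\ga_i:=v(\phi_i(\t))$ one gets an optimal MacLane chain of the inductive valuation $\mu_r$ attached to $F$. The key numerical input will be the recurrence~(\ref{recurrence}), namely $\ga_i/m_i=\la_0/m_0+\cdots+\la_i/m_i$; so~(\ref{ci}) is equivalent to the assertion $\delta_i=\ga_i/m_i$ for each $i$, i.e. that the main invariant $\delta_i=v(\t-\al_i)$ coincides with the normalized value $v(\phi_i(\t))/\deg(\phi_i)$ of the corresponding key polynomial. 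Wait — that is not quite a restatement; let me be careful. Actually~(\ref{recurrence}) gives $\ga_i/m_i=\sum_{j\le i}\la_j/m_j$, whereas~(\ref{ci}) claims $\delta_i=\sum_{j\le i}\la_j$. These agree only if the intended reading uses a suitably normalized $\la_j$, or if one argues $\delta_i-\delta_{i-1}=\la_i$ directly; I would therefore prove the incremental form $\delta_i-\delta_{i-1}=\la_i$ (with $\delta_{-1}:=0$) and sum.

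The core computation is the identity $v(\phi_i(\t))=m_i\,\delta_i$ — or rather $v(\phi_i(\t))=v(\t-\al_i)+(\text{terms from the other roots of }\phi_i)$. Concretely: $\phi_i(\t)=\lc\prod_{\al_i'\in\op{Z}(\phi_i)}(\t-\al_i')$, so $v(\phi_i(\t))=\sum_{\al_i'}v(\t-\al_i')$. Here the tameness of $\al_i$ (guaranteed by Lemma~\ref{diameter+altame}(2)) is what lets me control the values $v(\t-\al_i')$ over the full Galois orbit: applying Theorem~\ref{Valuesti} (item (2), or rather its analogue for $\al_i$ in place of $\t$, valid since $[\al_0,\dots,\al_{i-1}]$ is a complete Okutsu sequence for $\al_i$) gives that the multiset $\{v(\al_i-\al_i')\}_{\al_i'\ne\al_i}$ is $\{\delta_0^{t_0},\dots,\delta_{i-1}^{t_{i-1}}\}$ with explicit multiplicities summing to $m_i-1$. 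Combining with $v(\t-\al_i)=\delta_i$ and the ultrametric inequality $v(\t-\al_i')=\min\{v(\t-\al_i),v(\al_i-\al_i')\}=v(\al_i-\al_i')$ for $\al_i'\ne\al_i$ (since $v(\al_i-\al_i')\le\delta_{i-1}<\delta_i$ by the distinguished-pair property), I obtain
$$
v(\phi_i(\t))=\delta_i+\sum_{j=0}^{i-1}t_j^{(i)}\delta_j,\qquad t_j^{(i)}=\frac{m_i}{m_j}-\frac{m_i}{m_{j+1}}.
$$
Dividing by $m_i$ and subtracting the corresponding expression for $\phi_{i-1}$ (using $\ga_{i-1}/m_{i-1}=v(\phi_{i-1}(\t))/m_{i-1}$ and the recurrence), the telescoping collapses to $\la_i$; this is the step requiring the most bookkeeping but it is purely formal once the multiset description is in hand.

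The main obstacle is not the algebra but pinning down the exact meaning of $\la_i$ in~(\ref{ci}) so the statement is literally true: the $\la_i$ of section~\ref{secIndVals} live in $\qg$ and satisfy $\la_i=\mu_i(\phi_i)-\mu_{i-1}(\phi_i)$, and one must check that $v(\t-\al_i)-v(\t-\al_{i-1})$ equals precisely this. I expect the cleanest route is: (i) show $\delta_i=w(\mu_i)=\ga_i/m_i$ is false in general but $\delta_i$ relates to it via a known normalization, OR (ii) bypass~(\ref{recurrence}) entirely and compute $v(\phi_i(\t))$ and $\mu_{i-1}(\phi_i)$ separately — note $\mu_{i-1}(\phi_i)=v_{\phi_{i-1}}$-type data can also be evaluated on the conjugates of $\al_{i-1}$ — then take the difference. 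I would lead with approach (ii) as the robust one, invoking Lemma~\ref{imagedelta} to identify $\g_{\mu_{i-1}}=\g_{v_{\phi_i}}$ and hence to express $\mu_{i-1}(\phi_i)=\sum_{\al_i'}\min\{v(\al_{i-1}-\al_i'),\text{slope terms}\}$ in terms of the same $\delta_j$'s, at which point $\la_i=\ga_i-\mu_{i-1}(\phi_i)$ unwinds to $\delta_i-\delta_{i-1}$ after the multiplicities cancel against those already computed for $v(\phi_i(\t))$.
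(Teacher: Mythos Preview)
Your core computation --- decomposing $v(\phi_i(\t))=\sum_{\al_i'}v(\t-\al_i')$ over the roots of $\phi_i$, invoking Theorem~\ref{Valuesti} for $\al_i$ (tame by Lemma~\ref{diameter+altame}(2)) to identify the multiset $\{v(\al_i-\al_i'):\al_i'\ne\al_i\}=\{\delta_0^{t_0},\dots,\delta_{i-1}^{t_{i-1}}\}$, and using the ultrametric to replace each $v(\al_i-\al_i')$ by $v(\t-\al_i')$ --- is exactly the paper's approach, and it yields the same identity $\ga_i=v(\phi_i(\t))=\delta_i+\sum_{j<i}t_j\,\delta_j$. The paper then finishes by straight induction on $i$: substituting $\delta_j=\la_0+\cdots+\la_j$ for $j<i$ into $\delta_i=\ga_i-\sum_{j<i} t_j\,\delta_j$, expanding $\ga_i$ via~(\ref{recurrence}), and checking (by a telescoping sum) that the coefficient of each $\la_j$ equals $1$. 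Your incremental variant --- divide the identity for $i$ by $m_i$, subtract the one for $i-1$ divided by $m_{i-1}$, and read off $(\delta_i-\delta_{i-1})/m_i=\ga_i/m_i-\ga_{i-1}/m_{i-1}=\la_i/m_i$ from~(\ref{recurrence}) --- is an equally valid and arguably cleaner way to finish.

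Your late worry about ``the exact meaning of $\la_i$'' is unwarranted: $\la_i=\ga_i-\mu_{i-1}(\phi_i)$ is unambiguous data of the optimal MacLane chain, and your telescoping already gives $\delta_i-\delta_{i-1}=\la_i$ on the nose, so there is no normalization issue to resolve. Approach~(ii) is both unnecessary and, as written, ill-posed: $\mu_{i-1}(\phi_i)$ is defined via the $\phi_{i-1}$-expansion of $\phi_i$, not by a sum of values $v(\al_{i-1}-\al_i')$ over roots of $\phi_i$, and trying to evaluate it that way would at best reprove~(\ref{recurrence}). Drop~(ii) and trust your telescoping.
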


\begin{proof}
Let us prove the formula by a recurrent argument on $i$. 
For $i=0$, we have $\phi_0=(x-\al_0)$ and
$$\la_0=\ga_0= v(\phi_0(\t))=v(\t-\al_0)=\delta_0.$$

Now, suppose that $i>0$ and $\delta_j=\lambda_0+\cdots+\lambda_j$ for all $j<i$. Let us prove that \eqref{ci} holds for $i$. 

We claim that 
\begin{equation}\label{claimLast} 
v(\phi_i(\t))=\delta_i+t_0\delta_0+ \cdots + t_{i-1}\delta_{i-1},\qquad t_j=\dfrac{m_i}{m_j}-\dfrac{m_i}{m_{j+1}},\quad 0\le j<i.
\end{equation}

In fact, since $[\al_0,\dots,\al_{i-1}]$ is a complete Okutsu sequence for $\al_i$, Theorem  \ref{Valuesti} yields an equality of multisets:
\begin{equation}\label{mtset} 
\left\{v(\al_i-\xi)\mid \xi\in\op{Z}(\phi_i),\ \xi\ne\al_i\right\}=\left\{\delta_0^{t_0},\dots,\delta_{i-1}^{t_{i-1}}\right\},
\end{equation}
for the multiplicities $t_0,\dots,t_{i-1}$ indicated in (\ref{claimLast}).

Now, for each $\xi\in\op{Z}(\phi_i)$, $\xi\ne\al_i$, we have
\begin{equation}\label{t=ali} 
v(\t-\xi)=\mn\{v(\t-\al_i),\,v(\al_i-\xi)\}=v(\al_i-\xi),
\end{equation}
because $v(\t-\al_i)=\delta_i$, while 
$v(\al_i-\xi)\le \om(\al_i)=\delta(\al_i)=\delta_{i-1}<\delta_i.$

The equalities (\ref{mtset}) and (\ref{t=ali}) prove the claimed identity (\ref{claimLast}), because
$$
v(\phi_i(\t))=v(\t-\al_i)+\sum_{\xi\in\op{Z}(\phi_i),\;\xi\ne\al_i}v(\t-\xi)=\delta_i+t_0\delta_0+ \cdots + t_{i-1}\delta_{i-1}.
$$

Finally, from (\ref{claimLast}) and \eqref{recurrence} we deduce
$$
 \delta_i+t_0\delta_0+ \cdots + t_{i-1}\delta_{i-1}=\ga_i= \dfrac{m_i}{m_0}\,\la_0+\cdots+\dfrac{m_i}{m_i}\,\la_i,
$$
from which we may express $\delta_i$ as
$$
\delta_i=\dfrac{m_i}{m_0}\,\la_0+\cdots+\dfrac{m_i}{m_i}\,\la_i-t_0\delta_0- \cdots - t_{i-1}\delta_{i-1}.
$$
By applying the induction hypothesis, we may express $\delta_i$ as a linear combination $$\delta_i=a_0\la_0+\cdots+a_{i-1}\la_{i-1}+\la_i,$$ where, for $j<i$, each coefficient $a_j$ takes the value:
$$
a_j=\dfrac{m_i}{m_j}-t_j-t_{j+1}-\cdots -t_{i-1}=\dfrac{m_i}{m_i}=1.
$$
This ends the proof of the proposition.
\end{proof}

The next result follows from Theorem \ref{Valuesti}, Proposition \ref{lambdas} and equation (\ref{recurrence}).

\begin{corollary}\label{last}
If $\t\in\kb$ is quasi-tame, then
$$
\delta(\t)=\om(\t)=\la_0+\cdots+\la_r=\ga_r-\sum_{0\le i<r}\dfrac{m_{i+1}-m_i}{m_i}\,\ga_i.
$$
\end{corollary}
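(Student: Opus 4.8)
The plan is to derive Corollary \ref{last} by combining the three ingredients the paper has already assembled: the equality $\delta(\t)=\om(\t)=\delta_r$ from Theorem \ref{Valuesti}(3), the closed form $\delta_r=\la_0+\cdots+\la_r$ from Proposition \ref{lambdas} (applicable because a quasi-tame $\t$ has $\al_r$ tame by definition), and the recurrence (\ref{recurrence}) which lets us re-express the sum of secondary slopes in terms of the main slopes $\ga_i$ and the degrees $m_i$. So the only real content left is the algebraic manipulation turning $\la_0+\cdots+\la_r$ into $\ga_r-\sum_{0\le i<r}\frac{m_{i+1}-m_i}{m_i}\,\ga_i$.

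First I would record that by Theorem \ref{Valuesti}(3), $\delta(\t)=\om(\t)=\delta_r$, and that since $\t$ is quasi-tame, $\al_r$ is tame over $K$, so Proposition \ref{lambdas} gives $\delta_r=\la_0+\cdots+\la_r$. This already yields the middle equality of the corollary. For the last equality, I would start from (\ref{recurrence}) written for the index $r$, namely $\ga_r/m_r=\la_0/m_0+\cdots+\la_r/m_r$, and more generally for each $i$, $\ga_i/m_i=\sum_{j\le i}\la_j/m_j$. The trick is to notice that $\la_i/m_i=\ga_i/m_i-\ga_{i-1}/m_{i-1}$ for $i\ge1$ (an immediate consequence of (\ref{recurrence}) by telescoping), with $\la_0/m_0=\ga_0/m_0$. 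Hence $\la_i=m_i\big(\ga_i/m_i-\ga_{i-1}/m_{i-1}\big)=\ga_i-\frac{m_i}{m_{i-1}}\ga_{i-1}$ for $i\ge1$, and $\la_0=\ga_0$.

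Then summing, $\sum_{i=0}^r\la_i=\ga_0+\sum_{i=1}^r\big(\ga_i-\frac{m_i}{m_{i-1}}\ga_{i-1}\big)=\sum_{i=0}^r\ga_i-\sum_{i=1}^r\frac{m_i}{m_{i-1}}\ga_{i-1}=\ga_r+\sum_{i=0}^{r-1}\ga_i-\sum_{i=0}^{r-1}\frac{m_{i+1}}{m_i}\ga_i=\ga_r-\sum_{0\le i<r}\big(\frac{m_{i+1}}{m_i}-1\big)\ga_i=\ga_r-\sum_{0\le i<r}\frac{m_{i+1}-m_i}{m_i}\,\ga_i$, which is exactly the claimed formula. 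I do not anticipate a genuine obstacle here; the only point requiring minor care is the index bookkeeping in the telescoping sum (keeping the $\ga_r$ term separate and reindexing $\sum_{i=1}^r\frac{m_i}{m_{i-1}}\ga_{i-1}$ as $\sum_{i=0}^{r-1}\frac{m_{i+1}}{m_i}\ga_i$), and verifying the edge case $r=0$, where the sum over $0\le i<r$ is empty and both sides reduce to $\la_0=\ga_0$.
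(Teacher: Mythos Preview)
Your proposal is correct and follows exactly the approach the paper indicates: invoking Theorem \ref{Valuesti}(3), Proposition \ref{lambdas}, and equation (\ref{recurrence}), with the added virtue that you carry out explicitly the telescoping computation that the paper leaves to the reader.
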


The last equality yields an expression for $\om(\t)$ which is equivalent to a formula of Brown-Merzel in \cite{BrownMerzel}.

\end{document}